\documentclass[pdflatex,sn-mathphys-num]{sn-jnl}

\usepackage{graphicx}%
\usepackage{multirow}%
\usepackage{amsmath,amssymb,amsfonts}%
\usepackage{amsthm}%
\usepackage{mathrsfs}%
\usepackage[title]{appendix}%
\usepackage{xcolor}%
\usepackage{textcomp}%
\usepackage{manyfoot}%
\usepackage{booktabs}%
\usepackage{algorithm}%
\usepackage{algorithmicx}%
\usepackage{algpseudocode}%
\usepackage{listings}%

\usepackage{tikz}
\usepackage{tikz-cd}

\theoremstyle{thmstyleone}%
\newtheorem{theorem}{Theorem}
\newtheorem{proposition}[theorem]{Proposition}%

\theoremstyle{thmstyletwo}%

\newtheorem{lemma}{Lemma}

\theoremstyle{thmstylethree}%
\newtheorem{definition}{Definition}%

\begin{document}

\title[Article Title]{Tensor products, internal homs, and model structures in two dimensional category theory}


\author*[1]{\fnm{Johnathon} \sur{Taylor}}\email{jt3theend17@gmail.com}

\affil*[1]{\orgdiv{Unaffiliated}, \orgaddress{\city{Chicago}, \state{Illinois}, \country{United States}}}


\abstract{In this paper, we introduce a new symmetric monoidal structure on
$\mathbf{Cat}$, called the \emph{graph tensor product}, with unit
given by the terminal category. This tensor product falls in the middle of a factorization between the funny tensor product
and the Cartesian product, giving a factorization connecting these two
classical monoidal structures. We extend this construction to a symmetric
monoidal structure on $2\mathbf{Cat}$, again with unit $D^0$, which
provides an analogous factorization between the funny tensor product and
the Cartesian product of $2$-categories. Using the
$(\mathrm{bo},\mathrm{lff})$ factorization system on $2\mathbf{Cat}$, we
construct a new symmetric monoidal closed model structure on
$2\mathbf{Cat}$ whose tensor product restricts to the Cartesian product
on the subcategory of flexible $2$-categories. Finally, we prove that
this symmetric monoidal model structure fits into a square of weak
symmetric monoidal Quillen equivalences relating the Gray tensor product and
the flexible tensor product.}

\keywords{factorization systems, monoidal closed structures, model categories}


\pacs[MSC Classification]{18A32,18M05,18N10}

\maketitle

\section{Introduction}
Recently, Campbell \cite{Campbell2026} established the existence of a
category $\mathbf{Flex}$ equipped with a cartesian closed model category
structure whose fibrant objects form a category equivalent to the
category of bicategories and pseudofunctors. As Campbell notes, this
provides a convenient base of enrichment for higher category theory.
However, we believe that several aspects of this construction remain
incomplete. The goal of this paper is to further develop these ideas by
constructing and comparing additional monoidal structures on
$2\mathbf{Cat}$ and $\mathbf{Flex}$.

In Section 1, we recall the two biclosed monoidal structures on
$\mathbf{Cat}$ as background. We then introduce the
\emph{graph tensor product} of categories. This tensor product provides an
intermediate structure between the two biclosed monoidal structures,
namely the funny tensor product and the Cartesian product. Moreover, when
restricted to the subcategory $\mathbf{Free}$ of free categories, the
graph tensor product agrees with the Cartesian product.

In Section 2, we recall the $(\mathrm{bo},\mathrm{lff})$ factorization
system on $2\mathbf{Cat}$. We then describe the Cartesian product and the
funny tensor product of $2$-categories and introduce the
\emph{graph tensor product} of $2$-categories. When restricted to the
essential image of the free functor from reflexive $2$-graphs to
$2\mathbf{Cat}$, the graph tensor product agrees with the Cartesian
product. We recover the result of \cite{BourkeGurski} that the Gray
tensor product arises as the middle object of a factorization associated
to the $(\mathrm{bo},\mathrm{lff})$ factorization system on
$2\mathbf{Cat}$. Furthermore, we introduce the
\emph{flexible tensor product} of $2$-categories as another intermediate
object in this factorization.

We paint out the details of an internal hom $[-,-]_f$, inspired by Lack in \cite{Lack2006}, on $2\mathbf{Cat}$ and prove the following theorem.

\begin{center}
 \emph{Theorem \ref{2_cat_with_flex_prod_is_sym_mon_closed}: The category
$2\mathbf{Cat}$ equipped with the flexible tensor product is a symmetric
monoidal closed category with internal hom $[-,-]_f$.}   
\end{center}

We show in Theorem \ref{restrict_of_flex_prod_is_cart_prod} that the
flexible tensor product restricts to the Cartesian product on the
subcategory $\mathbf{Flex}$ of flexible $2$-categories. Furthermore, we
provide a new characterization of the internal hom introduced by
Campbell.

In Section 3, we recall Lack's model structure on $2\mathbf{Cat}$
\cite{Lack2002} and the two monoidal model structures on
$\mathbf{Flex}$ constructed by Campbell \cite{Campbell2026}. We prove in
Theorem \ref{flex_provides_mon_mod_structr} that $2\mathbf{Cat}$,
equipped with Lack's model structure and the flexible tensor product, is
a monoidal model category. Finally, we prove in
Theorem \ref{id_ad_is_weak_Quill_equiv} and Theorem
\ref{adj_is_Quill_equiv} that these constructions assemble into a square
of weak symmetric monoidal Quillen equivalences.

\section{Tensors product of small categories}

\subsection*{The biclosed monoidal products}

There are two biclosed monoidal structures on the category
$\mathbf{Cat}$ of small categories: the Cartesian product and the funny
tensor product. The cartesian product is the categorical product in
$\mathbf{Cat}$, while the funny tensor product is obtained by freely
combining the morphisms of two categories without imposing the
interchange relation. These are the only biclosed monoidal structures on
$\mathbf{Cat}$ \cite{FoltzLairKelly1980}, and they play a fundamental
role in the theory of enriched categories.

The \emph{cartesian product} of categories $A$ and $B$, denoted
$A\times B$, has objects
\[
\operatorname{ob}(A\times B)
=
\operatorname{ob}(A)\times\operatorname{ob}(B),
\]
and hom-sets
\[
(A\times B)\bigl((a,b),(a',b')\bigr)
=
A(a,a')\times B(b,b').
\]
Composition and identities are defined componentwise. Equivalently,
$A\times B$ is generated by the horizontal and vertical copies of the
morphisms of $A$ and $B$, subject to the interchange relation
\[
(f,1_{b'})\circ(1_a,g)
=
(1_{a'},g)\circ(f,1_b),
\]
for every pair of morphisms $f:a\to a'$ in $A$ and
$g:b\to b'$ in $B$.

The \emph{funny tensor product} of categories $A$ and $B$, denoted
$A\star B$, has the same objects as $A\times B$. It is generated by
morphisms
\[
l_{a,g}:(a,b)\longrightarrow(a,b')
\]
for each morphism $g:b\to b'$ in $B$, and
\[
r_{f,b}:(a,b)\longrightarrow(a',b)
\]
for each morphism $f:a\to a'$ in $A$, subject only to the relations
encoding the identities and compositions in $A$ and $B$:
\[
l_{a,1_b}=1_{(a,b)},
\qquad
l_{a,g'g}=l_{a,g'}\circ l_{a,g},
\]
and
\[
r_{1_a,b}=1_{(a,b)},
\qquad
r_{f'f,b}=r_{f',b}\circ r_{f,b}.
\]
Unlike the cartesian product, no interchange relation is imposed between
the left and right generators. Consequently, for morphisms
$f:a\to a'$ and $g:b\to b'$, the composites
\[
r_{f,b'}\circ l_{a,g}
\qquad\text{and}\qquad
l_{a',g}\circ r_{f,b}
\]
are generally distinct.
\subsection*{The graph product}
We use this subsection to define the \emph{graph product} of categories.
\begin{definition}
Let $A$ and $B$ be categories. The \emph{graph product}
$A\diamond B$ is the category obtained from the funny tensor
product $A\star B$ by freely adjoining, for every pair of morphisms
$f:a\to a'$ in $A$ and $g:b\to b'$ in $B$, a new generating morphism
\[
\delta_{f,g}:(a,b)\longrightarrow(a',b').
\]
Equivalently, for every square in $A\star B$
\[
\begin{tikzpicture}[baseline=(current bounding box.center),>=stealth,scale=1]
\node (00) at (0,0) {$(a,b)$};
\node (10) at (2.5,0) {$(a',b)$};
\node (01) at (0,2) {$(a,b')$};
\node (11) at (2.5,2) {$(a',b')$};

\draw[->] (00) -- node[below] {$(f,1)$} (10);
\draw[->] (10) -- node[right] {$(1,g)$} (11);
\draw[->] (00) -- node[left] {$(1,g)$} (01);
\draw[->] (01) -- node[above] {$(f,1)$} (11);

\draw[->] (00) -- node[sloped,above] {$\delta_{f,g}$} (11);
\end{tikzpicture}
\]
one freely adjoins the diagonal morphism $\delta_{f,g}$ and satisfies the unit relations
\[
\delta_{f,\mathrm{id}_b}=r_{f,b},
\qquad
\delta_{\mathrm{id}_a,g}=l_{a,g}.
\]
\end{definition}

The category $A\diamond B$ is generated by the following morphisms.

\begin{itemize}
    \item For every object $a\in A$ and morphism $g:b\to b'$ in $B$, a generator
    \[
    l_{a,g}:(a,b)\to(a,b').
    \]

    \item For every morphism $f:a\to a'$ in $A$ and object $b\in B$, a generator
    \[
    r_{f,b}:(a,b)\to(a',b).
    \]

    \item For every pair of morphisms
    \[
    f:a\to a',
    \qquad
    g:b\to b',
    \]
    a diagonal generator
    \[
    \delta_{f,g}:(a,b)\to(a',b').
    \]
\end{itemize}

The generators satisfy the relations
\begin{align*}
l_{a,\mathrm{id}_b}&=\mathrm{id}_{(a,b)},&
l_{a,hg}&=l_{a,h}\circ l_{a,g},\\
r_{\mathrm{id}_a,b}&=\mathrm{id}_{(a,b)},&
r_{hf,b}&=r_{h,b}\circ r_{f,b},
\end{align*}
for all composable morphisms \(g,h\) in \(B\) and \(f,h\) in \(A\). The diagonal generators \(\delta_{f,g}\)
satisfy the following unit relations
\[
\delta_{f,\mathrm{id}_b}=r_{f,b},
\qquad
\delta_{\mathrm{id}_a,g}=l_{a,g}.
\]

Given functors $F:A\to A'$ and $G:B\to B'$, define
\[
F\diamond G:A\diamond B\longrightarrow A'\diamond B'
\]
by
\[
(F\diamond G)(a,b)=(F(a),G(b))
\]
on objects and by
\begin{align*}
(F\diamond G)(l_{a,g})
&=l_{F(a),G(g)},\\
(F\diamond G)(r_{f,b})
&=r_{F(f),G(b)},\\
(F\diamond G)(\delta_{f,g})
&=\delta_{F(f),G(g)}
\end{align*}
on generating morphisms. The defining relations are preserved, and hence
$F\diamond G$ extends uniquely to a functor.

\begin{definition}
Let $A$, $B$, and $C$ be categories. The associativity morphism
\[
\alpha_{A,B,C}:(A\diamond B)\diamond C
\longrightarrow
A\diamond(B\diamond C)
\]
is the functor determined by the following assignments on generators.

On objects, we define
\[
\alpha_{A,B,C}(a,b,c)=(a,b,c).
\]

For the generating morphisms of $(A\diamond B)\diamond C$, we define
\begin{align*}
\alpha_{A,B,C}(l_{(a,b),h})
&=l_{a,l_{b,h}},\\
\alpha_{A,B,C}(r_{l_{a,g},c})
&=l_{a,r_{g,c}},\\
\alpha_{A,B,C}(r_{r_{f,b},c})
&=r_{f,(b,c)},\\
\alpha_{A,B,C}(r_{\delta_{f,g},c})
&=\delta_{f,r_{g,c}},\\
\alpha_{A,B,C}(\delta_{l_{a,g},h})
&=l_{a,\delta_{g,h}},\\
\alpha_{A,B,C}(\delta_{r_{f,b},h})
&=\delta_{f,l_{b,h}},\\
\alpha_{A,B,C}(\delta_{\delta_{f,g},h})
&=\delta_{f,\delta_{g,h}}.
\end{align*}
Here $f:a\to a'$ is a morphism in $A$, $g:b\to b'$ is a morphism in
$B$, and $h:c\to c'$ is a morphism in $C$.
\end{definition}

\begin{lemma}\label{lemma_ass_forms_nat_iso}
The associativity morphisms form a natural isomorphism $\alpha$.
\end{lemma}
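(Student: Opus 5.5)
To prove Lemma~\ref{lemma_ass_forms_nat_iso}, I would proceed in three stages. First, check that each $\alpha_{A,B,C}$ is a well-defined functor. Second, write down an explicit inverse $\alpha^{-1}_{A,B,C}$ on generators and verify both composites are identities. Third, check naturality in $A$, $B$, $C$.

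Since $(A\diamond B)\diamond C$ is presented by generators and relations, $\alpha_{A,B,C}$ is determined by its values on generators. It is well defined as soon as the defining relations are sent to valid equations in $A\diamond(B\diamond C)$. The generators of $(A\diamond B)\diamond C$ are $l_{(a,b),h}$, together with $r_{u,c}$ and $\delta_{u,h}$ for $u$ a morphism of $A\diamond B$. The definition only specifies these on the generating morphisms $u\in\{l_{a,g},r_{f,b},\delta_{f,g}\}$ of $A\diamond B$, so the first task is to extend the assignment to arbitrary $u$.

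\begin{itemize}
\item For $r_{u,c}$: the relation $r_{u'u,c}=r_{u',c}\circ r_{u,c}$ forces $r_{u,c}$, for a composite $u$, to be the composite of the $r$'s of its factors. So $u\mapsto r_{u,c}$ factors through a functor $A\diamond B\to (A\diamond B)\diamond C$.
\item For $\delta_{u,h}$ with $u$ composite: here I would show that no extension is needed beyond reading the presentation appropriately. The diagonal generators are indexed by all pairs of morphisms, but I expect one must interpret the definition as giving $\delta_{u,h}$ only for generating $u$, or else extend it by a chosen normal form.
\end{itemize}

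This indexing issue is the main obstacle. As literally written, $(A\diamond B)\diamond C$ has free diagonals $\delta_{u,h}$ for every word $u$ in $A\diamond B$, whereas $A\diamond(B\diamond C)$ has free diagonals $\delta_{f,v}$ for every word $v$ in $B\diamond C$. A bijection must match these indexing sets. My first attempt would be to describe both sides by normal forms: every morphism of $A\diamond B$ is a reduced word in non-identity generators. I would then define $\delta_{u,h}$ for a word $u=u_n\cdots u_1$ as a corresponding word obtained by applying the given formulas letterwise, with the final letter paired with $h$. I would check that this is compatible with the unit relations $\delta_{u,\mathrm{id}}=r_{u,c}$ and $\delta_{\mathrm{id},h}=l_{(a,b),h}$.

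For the identity and unit relations on the generators themselves, the verification is direct. For example, $\delta_{\delta_{f,g},\mathrm{id}_c}=r_{\delta_{f,g},c}$ is sent to $\delta_{f,\delta_{g,\mathrm{id}}}=\delta_{f,r_{g,c}}$, which is the image of the right-hand side. The cases $\delta_{\mathrm{id}_a,g}=l_{a,g}$ and the others go the same way.

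For the inverse, I would define $\alpha^{-1}$ symmetrically by reading each displayed formula right to left. For instance, $l_{a,l_{b,h}}\mapsto l_{(a,b),h}$, $l_{a,r_{g,c}}\mapsto r_{l_{a,g},c}$, $\delta_{f,l_{b,h}}\mapsto\delta_{r_{f,b},h}$, and so on. The seven cases on each side should exhaust the generators, up to the unit relations. Both composites then fix every generator and are therefore identities.

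Naturality is immediate from the generator formulas. For functors $F,G,H$, both $\alpha\circ((F\diamond G)\diamond H)$ and $(F\diamond(G\diamond H))\circ\alpha$ send, say, $\delta_{\delta_{f,g},h}$ to $\delta_{Ff,\delta_{Gg,Hh}}$. A functor out of a presented category is determined by its values on generators, so the two composites agree.
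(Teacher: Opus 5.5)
Your proposal has a genuine gap, and it cannot be closed: the indexing issue you flagged is fatal, because the lemma is false.

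\textbf{Why your repair fails.} You correctly observe that $(A\diamond B)\diamond C$ has a free diagonal $\delta_{u,h}$ for \emph{every} non-identity morphism $u$ of $A\diamond B$, composites included. So the displayed formulas do not define $\alpha$. Your letterwise extension sends the free generator $\delta_{u,h}$ to a composite that is already the image of a different morphism of the source. Take $A=B=C=D^1$ and $u=r_{f,1}\circ l_{0,g}$. Pairing the last letter with $h$ gives $\delta_{f,l_{1,h}}\circ l_{0,r_{g,0}}=\alpha(\delta_{r_{f,1},h}\circ r_{l_{0,g},0})$. Pairing the first letter gives $r_{f,(1,1)}\circ l_{0,\delta_{g,h}}=\alpha(r_{r_{f,1},1}\circ\delta_{l_{0,g},h})$. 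Either way $\alpha$ is not injective. Likewise, the seven displayed cases do not exhaust the generators on either side, even modulo the relations. Reading them backwards gives no value of $\alpha^{-1}$ on $\delta_{f,v}$ for composite $v$ in $B\diamond C$.

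\textbf{Why no repair exists.} The obstruction is a count. Let $A$ be the walking composable pair $0\xrightarrow{f_1}1\xrightarrow{f_2}2$, and let $B=C=D^1$ with arrows $g$ and $h$; all three are free categories. Write objects of both triple products as $(a,b,c)$.

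The hom-set $(A\diamond B)((0,0),(2,1))$ has six elements: $r_{f_2,1}l_{1,g}r_{f_1,0}$, $r_{f_2f_1,1}l_{0,g}$, $l_{2,g}r_{f_2f_1,0}$, $\delta_{f_2f_1,g}$, $r_{f_2,1}\delta_{f_1,g}$ and $\delta_{f_2,g}r_{f_1,0}$. So $(A\diamond B)\diamond C$ has six free diagonals $\delta_{u,h}\colon(0,0,0)\to(2,1,1)$. These are exactly its indecomposable morphisms $(0,0,0)\to(2,1,1)$, for two reasons. No $r$- or $l$-generator moves all three coordinates. And since every non-identity morphism strictly increases coordinates, a freely adjoined arrow cannot be factored.

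In $A\diamond(B\diamond C)$, the indecomposable morphisms $(0,0,0)\to(2,1,1)$ are the $\delta_{f_2f_1,v}$ with $v\in\{r_{g,1}l_{0,h},\,l_{1,h}r_{g,0},\,\delta_{g,h}\}$. There are only three.

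An isomorphism preserves indecomposables, so the identity-on-objects functor $\alpha_{A,B,C}$ cannot be invertible. Moreover, $\operatorname{ob}$ is represented by $D^0$, so by the Yoneda lemma any natural isomorphism $(A\diamond B)\diamond C\cong A\diamond(B\diamond C)$ is the identity on objects. Hence $\diamond$ admits no natural associator at all.

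The paper's proof simply asserts that the generators correspond bijectively, which is exactly the step you could not carry out, so it has the same gap. The defect lies in the statement and in the definition of $\diamond$, not in your execution.
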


\begin{proof}
For all categories $A$, $B$, and $C$, $\alpha_{A,B,C}$ is invertible because the list of generators for $(A\diamond B)\diamond C$ are bijectively mapped onto the generators for $A\diamond(B\diamond C)$. Let $F:A\to A'$, $G:B\to B'$, and $H:C\to C'$ be functors. Then 
\[
\alpha_{A',B',C'}\circ[ (F\diamond G)\diamond H])=[F\diamond(G\diamond H)]\circ\alpha_{A,B,C}
\]
by construction and definition. Therefore $\alpha$ is a natural isomorphism.
\end{proof}

\begin{definition}
Let $D^0$ denote the terminal category with unique object $*$. The
left and right unitors are the functors
\[
\lambda_A:D^0\diamond A\longrightarrow A,
\qquad
\rho_A:A\diamond D^0\longrightarrow A,
\]
defined on objects by
\[
\lambda_A(*,a)=a,
\qquad
\rho_A(a,*)=a.
\]

On generating morphisms, the left unitor is given by
\begin{align*}
\lambda_A(l_{*,g})&=g,\\
\lambda_A(r_{\mathrm{id}_*,a})&=\mathrm{id}_a,\\
\lambda_A(\delta_{\mathrm{id}_*,g})&=g,
\end{align*}
where $g:a\to a'$ is a morphism in $A$.

The right unitor is given by
\begin{align*}
\rho_A(r_{f,*})&=f,\\
\rho_A(l_{a,\mathrm{id}_*})&=\mathrm{id}_a,\\
\rho_A(\delta_{f,\mathrm{id}_*})&=f,
\end{align*}
where $f:a\to a'$ is a morphism in $A$.

These assignments preserve the defining relations of the graph product and
therefore extend uniquely to functors.
\end{definition}

\begin{lemma}
The left and right unity morphisms form natural isomorphisms $\lambda$ and $\rho$, respectively.
\end{lemma}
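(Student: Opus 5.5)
We show that each $\lambda_A$ and $\rho_A$ is an isomorphism of categories by writing down an explicit inverse, and then check naturality on generators, following the pattern of Lemma \ref{lemma_ass_forms_nat_iso}. By symmetry it suffices to treat $\lambda$; the argument for $\rho$ is the same with the roles of $l$ and $r$ exchanged.

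\textbf{Step 1: reduce the generators.} First we simplify the presentation of $D^0\diamond A$. Since $D^0$ has only the identity morphism $\mathrm{id}_*$, the relations
\[
r_{\mathrm{id}_*,a}=\mathrm{id}_{(*,a)},\qquad \delta_{\mathrm{id}_*,g}=l_{*,g}
\]
show that every generator of type $r$ is an identity and every generator of type $\delta$ coincides with one of type $l$. So $D^0\diamond A$ is generated by the morphisms $l_{*,g}$ alone, subject only to
\[
l_{*,\mathrm{id}_a}=\mathrm{id}, \qquad l_{*,hg}=l_{*,h}\circ l_{*,g}.
\]
The remaining relations are consequences of these, or identify generators already identified. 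This is the step that needs care: we must check that no relation involving $\delta$ or $r$ imposes anything beyond the composition law of $A$. The unit relations are exactly the ones we used, and the $r$-composition relation becomes $\mathrm{id}\circ\mathrm{id}=\mathrm{id}$, so nothing extra arises.

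\textbf{Step 2: define the inverse.} It follows that $D^0\diamond A$ is the category presented by the underlying graph of $A$ modulo the composition and identity relations of $A$, and this presentation is precisely $A$ itself. Concretely, define $\lambda_A^{-1}:A\to D^0\diamond A$ by $a\mapsto(*,a)$ and $g\mapsto l_{*,g}$. This is a functor by the $l$-relations. Then $\lambda_A\circ\lambda_A^{-1}=\mathrm{id}_A$ is immediate. For the other composite, $\lambda_A^{-1}\circ\lambda_A$ fixes each generator: it sends $l_{*,g}$ to $l_{*,g}$, sends $r_{\mathrm{id}_*,a}$ to $\mathrm{id}=r_{\mathrm{id}_*,a}$, and sends $\delta_{\mathrm{id}_*,g}$ to $l_{*,g}=\delta_{\mathrm{id}_*,g}$. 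A functor that fixes all generators is the identity, so $\lambda_A$ is invertible.

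\textbf{Step 3: naturality.} For a functor $F:A\to A'$ we check that
\[
F\circ\lambda_A=\lambda_{A'}\circ(\mathrm{id}_{D^0}\diamond F)
\]
on generators. On objects both sides send $(*,a)$ to $F(a)$. On the generator $l_{*,g}$ both sides give $F(g)$. On $r_{\mathrm{id}_*,a}$ both sides give $\mathrm{id}_{F(a)}$, using $F(\mathrm{id}_a)=\mathrm{id}_{F(a)}$. On $\delta_{\mathrm{id}_*,g}$ both sides give $F(g)$. Since the two functors agree on generators, they are equal.

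The same three steps, applied to $A\diamond D^0$ with the generators $r_{f,*}$ in place of $l_{*,g}$, show that $\rho$ is a natural isomorphism.
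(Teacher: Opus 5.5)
Your proof is correct and uses the same generator-level idea as the paper, whose entire proof is the remark that $\lambda_A$ and $\rho_A$ carry the generators of the source bijectively onto those of the target. Your version is more careful than that one-liner: taken literally, the map on generators is not a bijection ($r_{\mathrm{id}_*,a}$ goes to $\mathrm{id}_a$, and both $l_{*,g}$ and $\delta_{\mathrm{id}_*,g}$ go to $g$), and your Step 1 reduction via the unit relations, the explicit inverse, and the naturality check on generators supply what the paper leaves implicit.
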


\begin{proof}
Once again, the list of generators of the source are mapped bijectively onto the generators of the target.
\end{proof}

\begin{definition}
Let $A$ and $B$ be categories. The symmetry morphism is the functor
\[
\sigma_{A,B}:A\diamond B\longrightarrow B\diamond A
\]
defined on objects by
\[
\sigma_{A,B}(a,b)=(b,a).
\]

On generating morphisms, the symmetry is given by
\begin{align*}
\sigma_{A,B}(l_{a,g})
&=r_{g,a},\\
\sigma_{A,B}(r_{f,b})
&=l_{b,f},\\
\sigma_{A,B}(\delta_{f,g})
&=\delta_{g,f}.
\end{align*}
These assignments preserve the defining relations of the graph product and
therefore extend uniquely to a functor.
\end{definition}

\begin{lemma}
The symmetry morphisms form a natural isomorphism $\sigma$.
\end{lemma}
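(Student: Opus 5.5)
To prove that the symmetry morphisms form a natural isomorphism $\sigma$, I will follow the pattern of Lemma~\ref{lemma_ass_forms_nat_iso}: first show that each component $\sigma_{A,B}$ is an isomorphism, and then check naturality on generators.

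For invertibility, I will exhibit $\sigma_{B,A}:B\diamond A\to A\diamond B$ as an explicit inverse. On objects, $\sigma_{B,A}\sigma_{A,B}(a,b)=\sigma_{B,A}(b,a)=(a,b)$. On generators,
\[
\sigma_{B,A}\sigma_{A,B}(l_{a,g})=\sigma_{B,A}(r_{g,a})=l_{a,g},\qquad
\sigma_{B,A}\sigma_{A,B}(r_{f,b})=\sigma_{B,A}(l_{b,f})=r_{f,b},
\]
and
\[
\sigma_{B,A}\sigma_{A,B}(\delta_{f,g})=\delta_{f,g}.
\]
A functor out of a category presented by generators and relations is determined by its values on generators. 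Since the composite agrees with the identity on all generators, it is the identity, and symmetrically $\sigma_{A,B}\sigma_{B,A}=\mathrm{id}$. This is the sense in which the generators of $A\diamond B$ are mapped bijectively onto those of $B\diamond A$.

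The one point needing care is well-definedness: the assignments must respect the relations, so that $\sigma_{A,B}$ is a functor at all. The definition asserts this, but I will verify the less obvious unit relations, since the swap exchanges the roles of $l$ and $r$. We have
\[
\sigma(\delta_{f,\mathrm{id}_b})=\delta_{\mathrm{id}_b,f}=l_{b,f}=\sigma(r_{f,b}),
\qquad
\sigma(\delta_{\mathrm{id}_a,g})=\delta_{g,\mathrm{id}_a}=r_{g,a}=\sigma(l_{a,g}).
\]
The identity and composition relations for $l$ are sent to the corresponding relations for $r$, and vice versa. This is the main (mild) obstacle; everything else is formal.

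For naturality, let $F:A\to A'$ and $G:B\to B'$ be functors. I will show
\[
\sigma_{A',B'}\circ(F\diamond G)=(G\diamond F)\circ\sigma_{A,B}
\]
by comparing the two sides on objects and on each of the three types of generators. On objects, both sides send $(a,b)$ to $(Gb,Fa)$. On generators:
\begin{itemize}
\item for $l_{a,g}$, both sides give $r_{Gg,Fa}$;
\item for $r_{f,b}$, both sides give $l_{Gb,Ff}$;
\item for $\delta_{f,g}$, both sides give $\delta_{Gg,Ff}$.
\end{itemize}
Both composites are functors out of a presented category that agree on generators, so they are equal. Hence $\sigma$ is a natural isomorphism.
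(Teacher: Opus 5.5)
Your proof is correct and follows the paper's approach: the paper just notes that $\sigma_{A,B}$ maps the generators of $A\diamond B$ bijectively onto those of $B\diamond A$, and leaves naturality as evident by construction, as in Lemma~\ref{lemma_ass_forms_nat_iso}. Your version fills in what that one-liner omits, namely the explicit inverse $\sigma_{B,A}$, the check that the swap respects the unit relations, and naturality verified generator by generator.
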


\begin{proof}
Just as before, the list of generators of the source are mapped bijectively onto the generators of the target.
\end{proof}

\begin{theorem}
The category of small categories $\mathbf{Cat}$ equipped with the graph product is a symmetric monoidal category.
\end{theorem}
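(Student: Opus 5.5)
The plan is to assemble the data already constructed and verify the coherence axioms on generators. By Lemma \ref{lemma_ass_forms_nat_iso} and the two lemmas that follow it, we have natural isomorphisms $\alpha$, $\lambda$, $\rho$ and $\sigma$, with unit object $D^0$. Functoriality of $\diamond$ in each variable is immediate: $(F'\diamond G')\circ(F\diamond G)$ and $(F'F)\diamond(G'G)$ agree on the generators $l$, $r$, $\delta$, and $\mathrm{id}\diamond\mathrm{id}=\mathrm{id}$ holds for the same reason. So $\diamond$ is a bifunctor on $\mathbf{Cat}$. What remains are the pentagon and triangle axioms for a monoidal category, followed by the hexagon and involutivity $\sigma_{B,A}\circ\sigma_{A,B}=\mathrm{id}$ for the symmetry.

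The key observation is that every functor in these diagrams is determined by its values on objects and on generating morphisms of the iterated graph product. Since all of the structure maps send generators to generators, two parallel composites agree as soon as they agree on each generator. I will first record that the generators of an iterated product such as $((A\diamond B)\diamond C)\diamond D$ are indexed by the choice of, for each factor, either an identity or a morphism, with not all identities. In $(A\diamond B)\diamond C$, for instance, the seven nontrivial patterns correspond exactly to the seven clauses defining $\alpha_{A,B,C}$. This indexing is bracketing-independent, and $\alpha$ is precisely the map preserving it. Both paths around the pentagon then send the generator labelled by a pattern $(x_1,x_2,x_3,x_4)$ to the generator with the same pattern in $A\diamond(B\diamond(C\diamond D))$, so they coincide. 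The triangle axiom is handled the same way. Using the unit relations $\delta_{f,\mathrm{id}}=r_f$ and $\delta_{\mathrm{id},g}=l_g$, a generator of $(A\diamond D^0)\diamond B$ is determined by its pattern in the $A$ and $B$ slots, and both $\rho_A\diamond\mathrm{id}_B$ and $(\mathrm{id}_A\diamond\lambda_B)\circ\alpha_{A,D^0,B}$ send it to the corresponding generator of $A\diamond B$.

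For the symmetry, $\sigma$ simply permutes the slots of a pattern. Involutivity is then clear, and both sides of the hexagon realize the cyclic permutation $(a,b,c)\mapsto(b,c,a)$ on patterns, so they agree on generators.

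I expect the main obstacle to be the bookkeeping in the pentagon and hexagon: one has to check that the clause-by-clause definition of $\alpha$ really is the pattern-preserving map on every generator type. This includes generators such as $r_{l_{a,g},c}$, whose inner index is itself a generator of a graph product, and it requires confirming that the unit relations identify degenerate generators consistently so that the maps are well defined on them. My first step is therefore to prove a small normal-form lemma: the generators of an $n$-fold graph product, modulo the unit relations, are in bijection with tuples of morphisms, not all identities, compatibly with any bracketing. Once this lemma is in place, each coherence diagram reduces to an equality of permutations or reindexings of tuples.
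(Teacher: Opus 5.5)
There is a genuine gap, and it lies in the normal-form lemma you plan to prove first. The same gap is in Lemma~\ref{lemma_ass_forms_nat_iso}, which you take as given. In $(A\diamond B)\diamond C$ the generators $r_{\phi,c}$ are functorial in $\phi$, so they reduce to $\phi$ a generator of $A\diamond B$. The diagonal generators $\delta_{\phi,h}$, however, obey no composition relation. So there is a separate free generator for \emph{every} non-identity morphism $\phi$ of $A\diamond B$, including composites such as $r_{f,b'}\circ l_{a,g}$, $l_{a',g}\circ r_{f,b}$ and $\delta_{f',g'}\circ\delta_{f,g}$. These are not tuples of morphisms of $A$ and $B$: already $\delta_{f,g}$, $r_{f,b'}\circ l_{a,g}$ and $l_{a',g}\circ r_{f,b}$ are three distinct morphisms with the same pattern $(f,g)$. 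The seven clauses defining $\alpha_{A,B,C}$ do not say where $\delta_{\phi,h}$ goes for composite $\phi$, and nothing in them hits $\delta_{f,\psi}$ for composite $\psi$ in $B\diamond C$. So the bijection of generators on which your pentagon and hexagon arguments rest does not exist, already for $n=3$.

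Nor can this be repaired by choosing $\alpha$ more cleverly. Suppose $X$ and $Y$ are free categories with $o_X,e_X,m_X$ objects, generating edges and non-identity morphisms. Then $X\diamond Y$ is again free: the $l$- and $r$-relations only express functoriality out of free categories, and the $\delta$'s are free. Its generating edges number $e_Xo_Y+o_Xe_Y+m_Xm_Y$.

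Take $A=B=D^1$ and let $C$ be the free category on $h_1\colon 0\to1$, $h_2\colon 1\to 2$. Then $D^1\diamond D^1$ has $(o,e,m)=(4,5,7)$ and $D^1\diamond C$ has $(o,e,m)=(6,10,21)$. So $(A\diamond B)\diamond C$ is free on $5\cdot3+4\cdot2+7\cdot3=44$ edges, while $A\diamond(B\diamond C)$ is free on $1\cdot6+2\cdot10+1\cdot21=47$ edges. An isomorphism of free categories must biject indecomposable morphisms, so these two categories are not isomorphic at all.

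Locally, there are $3$ indecomposable morphisms $(0,0,0)\to(1,1,2)$ on the left, one $\delta_{\phi,h_2h_1}$ for each of the three morphisms $\phi\colon(0,0)\to(1,1)$ in $D^1\diamond D^1$. On the right there are $6$, one $\delta_{f,\psi}$ for each of the six morphisms $\psi\colon(0,0)\to(1,2)$ in $D^1\diamond C$.

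Hence the graph product, defined with a free $\delta_{f,g}$ for every pair of morphisms, is not associative up to isomorphism, and the statement cannot be established by your route or any other. The paper's own justification (a diagram chase on generators, resting on the same claimed generator bijection) has the same defect. The unitors and the symmetry are fine; the failure is entirely in $\alpha$. The small test case $A=B=C=D^1$ is misleading, since there both sides happen to be free on $21$ edges.
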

We shall not provide the proof here. The proof follows from a diagram chase implemented on the generators. 

Let $\textbf{Free}$ be the category of free categories and free morphisms \cite{Campbell2026}. When we restrict to $\mathbf{Free}$, the graph product is the cartesian product inherited through an equivalence with reflexive graphs. We have just shown that we may extend its structure to all of $\textbf{Cat}$. Whereas the graph product does not form a biclosed monoidal structure on $\textbf{Cat}$, it interestingly does form a biclosed monoidal structure on $\textbf{Free}$.

\subsection*{Comparison of tensors}
We now provide a short subsection to compare the tensor products. There is a canonical factorization of tensor products of categories
\[
A\star B
\longrightarrow
A\diamond B
\longrightarrow
A\times B.
\]
The first functor freely adjoins the diagonal morphisms
$\delta_{f,g}$, while the second quotient identifies these diagonal
morphisms with the canonical composites in the cartesian product. Let $D^1$ be the walking morphism. The funny tensor product, the graph tensor product, and the cartesian product of $D^1$ with itself our displayed in the order we just listed, respectively.
\[
\begin{tikzpicture}

\node (a1) at (0,1) {$\bullet$};
\node (b1) at (1.5,1) {$\bullet$};
\node (c1) at (0,0) {$\bullet$};
\node (d1) at (1.5,0) {$\bullet$};

\draw[->] (a1)--(b1);
\draw[->] (a1)--(c1);
\draw[->] (b1)--(d1);
\draw[->] (c1)--(d1);

\node at (0.75,0.5) {$\neq$};

\node (a2) at (3.5,1) {$\bullet$};
\node (b2) at (5,1) {$\bullet$};
\node (c2) at (3.5,0) {$\bullet$};
\node (d2) at (5,0) {$\bullet$};

\draw[->] (a2)--(b2);
\draw[->] (a2)--(c2);
\draw[->] (b2)--(d2);
\draw[->] (c2)--(d2);

\draw[->] (a2)--(d2);

\node at (4.5,0.75) {$\neq$};

\node at (4,0.25) {$\neq$};

\node (a3) at (7,1) {$\bullet$};
\node (b3) at (8.5,1) {$\bullet$};
\node (c3) at (7,0) {$\bullet$};
\node (d3) at (8.5,0) {$\bullet$};

\draw[->] (a3)--(b3);
\draw[->] (a3)--(c3);
\draw[->] (b3)--(d3);
\draw[->] (c3)--(d3);

\node at (7.75,0.5) {$=$};

\end{tikzpicture}
\]

\section{Factorization systems, tensor products, and internal homs in two dimensional category theory}
\subsection*{Factorization systems}
 A factorization system on a category $\mathcal{C}$
is a pair of classes of morphisms
\[
(\mathcal{E},\mathcal{M})
\]
such that every morphism $f:X\to Y$ admits a factorization
\[
X\xrightarrow{e_f}Z_f\xrightarrow{m_f}Y
\]
with
\[
e_f\in\mathcal{E},\qquad m_f\in\mathcal{M},
\]
and satisfying
\[
\mathcal{E}={}^{\perp}\mathcal{M},
\qquad
\mathcal{M}=\mathcal{E}^{\perp}.
\]

\begin{definition}
A factorization system $(\mathcal{E},\mathcal{M})$ on a category $\mathcal{C}$ is \emph{reflective factorization system} if $\mathcal{C}$ has a terminal object $1$ and the maps in $\mathcal{E}$ satisfy 2 for 3.
\end{definition}

\begin{definition}
The \emph{$(\mathrm{bo},\mathrm{lff})$ factorization system} on $2\mathbf{Cat}$ is the
factorization system $(\mathcal{E},\mathcal{M})$ where $\mathcal{E}$ is
the class of $2$-functors that are bijective on objects and bijective on
arrows, and $\mathcal{M}$ is the class of locally full and faithful
$2$-functors.
\end{definition}

Bourke and Gurski shows that the $(\mathrm{bo},\mathrm{lff})$ factorization system on $2\mathbf{Cat}$ is reflective. 
\subsection*{The Basic tensor products of two categories}
\begin{definition}
The \emph{cartesian product} of $2$-categories $A$ and
$B$ is the $2$-category $A\times B$ whose
objects, $1$-morphisms, and $2$-morphisms are given componentwise. More
precisely,
\begin{align*}
\operatorname{ob}(A\times B)
    &=\operatorname{ob}(A)\times\operatorname{ob}(B),\\
(A\times\mathcal B)((a,b),(a',b'))
    &=A(a,a')\times B(b,b'),
\end{align*}
with identities and composition defined componentwise.
\end{definition}

\begin{definition}
The \emph{funny tensor product} of $2$-categories $A$ and
$B$ is the $2$-category $A\star B$ defined
as the pushout
\[
\begin{tikzpicture}[baseline=(current bounding box.center)]
\node (TL) at (0,1.5)
{$\operatorname{ob}(A)\times\operatorname{ob}(B)$};
\node (TR) at (4,1.5)
{$A\times\operatorname{ob}(B)$};
\node (BL) at (0,0)
{$\operatorname{ob}(A)\times B$};
\node (BR) at (4,0)
{$A\star B$};

\draw[->] (TL) -- (TR);
\draw[->] (TL) -- (BL);
\draw[->] (TR) -- (BR);
\draw[->] (BL) -- (BR);
\end{tikzpicture}
\]
in $2\mathbf{Cat}$.
\end{definition}

\begin{definition}
Let $A$ and $B$ be $2$-categories. We define the graph tensor product as follows. We first construct an intermediate pushout 
\[
\begin{tikzpicture}[baseline=(current bounding box.center)]
\node (TL) at (0,1.5)
{$\coprod_{\substack{(f,g)\in A_1\times B_1,\\ f\neq 1,g\neq 1}}S^0$};
\node (TR) at (4,1.5)
{$A\star B$};
\node (BL) at (0,0)
{$\coprod_{\substack{(f,g)\in A_1\times B_1,\\ f\neq 1,g\neq 1}}D^1$};
\node (BR) at (4,0)
{$A\diamond_p B$};

\draw[->] (TL) -- (TR);
\draw[->] (TL) -- (BL);
\draw[->] (TR) -- (BR);
\draw[->] (BL) -- (BR);
\end{tikzpicture}
\]
in $2\mathbf{Cat}$. This freely adds diagonal $1$-cells for every pair $(f,g)$ of non-identity $1$-cell $f$ of $A$ and non-identity $1$-cell $g$ of $B$. We shall label the $1$-cells added by $\delta_{f,g}$ where we have a non-identity $1$-cell $f$ of $A$ and non-identity $1$-cell $g$ of $B$. If either $f=1$ or $g=1$, we will have a morphism $\delta_{f,g}$, but in the case $f=1_a$, we will have $\delta_{1,g}=l_{a,g}$ and in the case $g=1_b$, we will have $\delta_{f,1_b}=r_{f,b}$.
We now define the \emph{graph product} of $A$ and $B$ as the pushout 

\[
\begin{tikzpicture}[baseline=(current bounding box.center)]
\node (TL) at (0,1.5)
{$\coprod_{\substack{(\alpha,\beta)\in A_2\times B_2,\\ \alpha\neq 1_1,\beta\neq 1_1}}S^1$};
\node (TR) at (4,1.5)
{$A\diamond_p B$};
\node (BL) at (0,0)
{$\coprod_{\substack{(\alpha,\beta)\in A_2\times B_2,\\ \alpha\neq 1_1,\beta\neq 1_1}}D^2$};
\node (BR) at (4,0)
{$A\diamond B$};

\draw[->] (TL) -- (TR);
\draw[->] (TL) -- (BL);
\draw[->] (TR) -- (BR);
\draw[->] (BL) -- (BR);
\end{tikzpicture}
\]
in $2\mathbf{Cat}$. We shall label the $2$-cells added as $\delta_{\alpha,\beta}$. If either $\alpha=1_1$ or $\beta=1_1$, we will have a $2$-cell $\delta_{\alpha,\beta}$, but in the case $\alpha=1_{1_a}$, we will have $\delta_{1_{1_a},\beta}=l_{a,\beta}$ and in the case $g=1_{1_b}$, we will have $\delta_{\alpha,1_{1_b}}=r_{\alpha,b}$. Moreover, we we write $i:A\star B\to A\diamond B$ for the $2$-functor inclusion.
\end{definition}

We have now obtained a square of $2$-categories.
\begin{equation}\label{important square}
    \begin{tikzpicture}[baseline=(current bounding box.center)]
\node (TL) at (0,1.5)
{$A\star B$};
\node (TR) at (4,1.5)
{$A\times B$};
\node (BL) at (0,0)
{$A\diamond B$};
\node (BR) at (4,0)
{$A\times B$};

\draw[->] (TL) to node[above]{$\mathbf{st}$} (TR);
\draw[->] (TL) to node[left]{$i$} (BL);
\draw[->] (TR) to node[right]{$1_{A\times B}$} (BR);
\draw[->] (BL) to node[below]{$\mathbf{st}$} (BR);
\end{tikzpicture}
\end{equation}

\subsection*{Tensor products via factorization systems}
We now apply Theorem 1.6 of \cite{BourkeGurski} to the square (\ref{important square}) to produce new tensor products from these old ones.

\begin{definition}
Let $A$ and $B$ be $2$-categories. We define the \emph{Gray tensor product} of $A$ and $B$, denoted by $A\otimes B$, to be the middle of the factorization of the map $\mathbf{st}:A\star B\to A\times B$  in the $(\mathrm{bo},\mathrm{lff})$-factorization system on $2\mathbf{Cat}$.
\end{definition}

That is to say that we obtain maps
\[
A\star B\xrightarrow{P}A\otimes B\xrightarrow{Q}A\times B,
\]
where $P$ is boba and $Q$ is lff. Theorem 3.16 of \cite{BourkeGurski} shows that this definition of the Gray tensor product coincides with the classical definition given by Gray in Chapter I, Section 4 of \cite{Gray1974}.

\begin{definition}
Let $A$ and $B$ be $2$-categories. We define the \emph{flexible tensor product} of $A$ and $B$, denoted by $A\boxtimes B$, to be the middle of the factorization of the map $\mathbf{st}:A\diamond B\to A\times B$ in the $(\mathrm{bo},\mathrm{lff})$-factorization system on $2\mathbf{Cat}$.
\end{definition}

That is to say that we obtain maps
\[
A\diamond B\xrightarrow{R}A\boxtimes B\xrightarrow{S}A\times B,
\]
where $P$ is boba and $Q$ is lff. Moreover, we induce a unique map 
\[
j:A\otimes B\to A\boxtimes B
\]
in $2\mathbf{Cat}$ such that the following diagram commutes.
\[
\begin{tikzpicture}[node distance=2cm]
\node (A) {$A\star B$};
\node (B) [below of=A] {$A\diamond B$};
\node (C) [right of=A] {$A\otimes B$};
\node (D) [below of=C] {$A\boxtimes B$};
\node (E) [right of=C] {$A\times B$};
\node (F) [below of=E] {$A\times B$};

\draw[->] (A) -- node[left] {$i$} (B);
\draw[->] (A) -- node[above] {$P$} (C);
\draw[->] (B) -- node[below] {$R$} (D);
\draw[->] (C) -- node[left] {$j$} (D);
\draw[->] (C) -- node[above] {$Q$} (E);
\draw[->] (D) -- node[below] {$S$} (F);
\draw[->] (E) -- node[right] {$1_{A\times B}$} (F);
\end{tikzpicture}
\]

\begin{lemma}\label{biequival_between_products}
Given $2$-categories $A$ and $B$, the $2$-functors 
\[
j:A\otimes B\to A\boxtimes B
\]
\[
S:A\boxtimes B\to A\times B
\]
are biequivalences.
\end{lemma}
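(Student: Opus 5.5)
The plan is to use the classical fact that the comparison $Q:A\otimes B\to A\times B$ from the Gray tensor product to the cartesian product is a biequivalence (Gray \cite{Gray1974}; see also \cite{BourkeGurski}). I will combine this with the two-out-of-three property for biequivalences. Since $Q = S\circ j$ by the commutative diagram defining $j$, it suffices to prove that $S$ is a biequivalence. Then $j$ is a biequivalence by two-out-of-three.

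To show $S:A\boxtimes B\to A\times B$ is a biequivalence, I will check the two standard conditions.

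\emph{Bijective on objects.} The map $R:A\diamond B\to A\boxtimes B$ is bijective on objects. The objects of $A\diamond B$ are exactly the pairs $(a,b)$, since the pushouts defining $A\diamond_p B$ and $A\diamond B$ add no new objects. The composite $S\circ R=\mathbf{st}$ is the identity on objects. Hence $S$ is bijective on objects, and in particular surjective.

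\emph{Local equivalence.} Since $S$ is locally fully faithful, each hom functor
\[
S_{(a,b),(a',b')}:(A\boxtimes B)((a,b),(a',b'))\longrightarrow A(a,a')\times B(b,b')
\]
is fully faithful. It therefore remains to show that it is essentially surjective, and I will in fact show it is surjective on objects. Given $(f,g)\in A(a,a')\times B(b,b')$, consider the 1-cell $R(\delta_{f,g})$, or equivalently $R(r_{f,b'}\circ l_{a,g})$. The map $\mathbf{st}$ sends $\delta_{f,g}$ to $(f,g)$, because the unit relations force $\mathbf{st}(\delta_{f,1})=(f,1)$ and $\mathbf{st}(\delta_{1,g})=(1,g)$, and the generic $\delta_{f,g}$ is defined to be sent to $(f,g)$. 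So $S$ is surjective on 1-cells, and therefore locally an equivalence of categories. This makes $S$ a biequivalence.

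One could also argue for $j$ directly: $j$ is bijective on objects, and locally it is a functor between categories that are both fully faithful over $A(a,a')\times B(b,b')$ via $Q$ and $S$. Essential surjectivity of $j$ locally then follows from $Q$ being locally essentially surjective.

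The main obstacle is making precise that $\mathbf{st}:A\diamond B\to A\times B$ is well defined on the new generators, namely $\delta_{f,g}\mapsto(f,g)$ and $\delta_{\alpha,\beta}\mapsto(\alpha,\beta)$. One must check that the boundaries match: the 2-cell $\delta_{\alpha,\beta}$ has source and target $\delta_{f,g}$ and $\delta_{f',g'}$, which map to $(f,g)$ and $(f',g')$. This check is routine from the pushout descriptions. The other point to pin down is citing the Gray biequivalence $Q$. If one prefers to avoid that citation, the same local argument applies verbatim to $Q$, using the 1-cell $r_{f,b'}\circ l_{a,g}$ in place of $\delta_{f,g}$.
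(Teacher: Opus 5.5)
Your proof is correct, but it runs in the opposite direction from the paper's.

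The paper treats $j$ first. It asserts that $j$ is bijective on objects because $P$, $i$, $R$ are, and that $j$ is surjective on $1$-cells, locally full and faithful because $Q$ and $S$ are. It then cites Lack for $Q$ being a biequivalence and deduces $S$ by two-out-of-three from $S\circ j=Q$. You instead verify directly that $S$ is bijective on objects, locally fully faithful, and surjective on $1$-cells, and obtain $j$ by two-out-of-three.

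Your order is the more robust one. Since $P$ and $R$ are bijective on $1$-cells and $j\circ P=R\circ i$, the map $j$ on $1$-cells is identified with $i:A\star B\to A\diamond B$. The freely adjoined diagonals $\delta_{f,g}$ are not in the image of $i$, so the paper's claim that $j$ is surjective on $1$-cells is not literally true. What does hold is local essential surjectivity. Because $S$ is locally fully faithful, the identity $2$-cell on $(f,g)$ lifts to an invertible $2$-cell $\delta_{f,g}\cong r_{f,b'}\circ l_{a,g}$ in $A\boxtimes B$. Your alternative direct argument for $j$ uses exactly this essential surjectivity rather than surjectivity, so it is sound.

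Your proof of $S$ needs no such repair, since $S$ genuinely is surjective on $1$-cells. Your remark that the same local argument applies verbatim to $Q$ (bijective on objects, locally fully faithful, and hitting $(f,g)$ with $r_{f,b'}\circ l_{a,g}$) also makes the external citation unnecessary. The paper's route depends on that citation.
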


\begin{proof}
   Notice that $j$ is a bijection on $0$-cells since $P$, $i$, and $R$ are. Moreover, $j$ is surjective on $1$-cells, locally full, and faithful since $Q$, $1_{A\times B}$, and $S$ are. Therefore $j$ is a biequivalence. We have that $Q$ is always a biequivalence by \cite{Lack2002}. As $Q$, $1_{A\times B}$, and $j$ are biequivalences, so is $S$.
\end{proof}

Let $A$ and $B$ both be copies of the $2$-category with precisely one non-invertible $1$-cell. Visually speaking we obtain the following two squares in the Gray tensor product, flexible tensor product, and cartesian product, respectively.
\[
\begin{tikzpicture}

\node (a1) at (0,1) {$\bullet$};
\node (b1) at (1.5,1) {$\bullet$};
\node (c1) at (0,0) {$\bullet$};
\node (d1) at (1.5,0) {$\bullet$};

\draw[->] (a1) --(b1);
\draw[->] (a1)--(c1);
\draw[->] (b1)--(d1);
\draw[->] (c1)--(d1);

\node at (0.75,0.5) {$\cong$};

\node (a2) at (3.5,1) {$\bullet$};
\node (b2) at (5,1) {$\bullet$};
\node (c2) at (3.5,0) {$\bullet$};
\node (d2) at (5,0) {$\bullet$};

\draw[->] (a2)--(b2);
\draw[->] (a2)--(c2);
\draw[->] (b2)--(d2);
\draw[->] (c2)--(d2);

\draw[->] (a2)--(d2);

\node at (4.5,0.75) {$\cong$};

\node at (4,0.25) {$\cong$};

\node (a3) at (7,1) {$\bullet$};
\node (b3) at (8.5,1) {$\bullet$};
\node (c3) at (7,0) {$\bullet$};
\node (d3) at (8.5,0) {$\bullet$};

\draw[->] (a3)--(b3);
\draw[->] (a3)--(c3);
\draw[->] (b3)--(d3);
\draw[->] (c3)--(d3);

\node at (7.75,0.5) {$=$};

\end{tikzpicture}
\]

The following lemma is immediate from construction.

\begin{lemma}\label{uni_prop_of_flex_prod}
A strict $2$-functor $F:A\boxtimes B\to C$ is determined by the composite
\[
A\otimes B\xrightarrow{j}A\boxtimes B\xrightarrow{F}C
\]
and a choice of $2$-cell isomorphism
\[
\xi_{f,g}:F(\delta_{f,g})\Rightarrow F(f,b')\circ F(a,g)
\]
for every non-identity $1$-cell $f$ of $A$ and every non-identity $1$-cell $g$ of $B$.
\end{lemma}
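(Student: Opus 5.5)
We need to show that a $2$-functor $F:A\boxtimes B\to C$ is determined by $F\circ j$ together with the family of invertible $2$-cells $F(\xi_{f,g})$. The plan is to describe $A\boxtimes B$ explicitly by generators and relations, and then read off this universal property.

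\textbf{Step 1: explicit description of $A\boxtimes B$.} The map $R:A\diamond B\to A\boxtimes B$ is bijective on objects and on $1$-cells. So $A\boxtimes B$ has the objects and $1$-cells of $A\diamond B$: composites of generators $l_{a,g}$, $r_{f,b}$ and the freely added diagonals $\delta_{f,g}$. The map $S$ is locally fully faithful, so for parallel $1$-cells $u,v$ we have
\[
(A\boxtimes B)(u,v)\cong (A\times B)(\mathbf{st}(u),\mathbf{st}(v)).
\]
Since $\mathbf{st}(\delta_{f,g})=(f,g)=\mathbf{st}\bigl(r_{f,b'}\circ l_{a,g}\bigr)$, the $2$-cell $(1_f,1_g)$ gives an invertible $2$-cell $\xi_{f,g}:\delta_{f,g}\Rightarrow (f,b')\circ(a,g)$ in $A\boxtimes B$. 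The same reasoning applied to $A\otimes B$ shows that $j$ is the identity on the sub-$2$-graph generated by the $l$'s and $r$'s. On the $2$-cells between such composites, $j$ is bijective, because both hom-categories are identified with those of $A\times B$.

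\textbf{Step 2: every cell factors through $j$ and the $\xi$'s.} Every $1$-cell of $A\boxtimes B$ is a composite of images of $1$-cells of $A\otimes B$ and diagonals $\delta_{f,g}$. Whiskering by the $\xi_{f,g}$ gives a canonical invertible $2$-cell $\theta_u:u\Rightarrow \hat u$, where $\hat u$ lies in the image of $j$ and is obtained by replacing each $\delta_{f,g}$ by $r_{f,b'}\circ l_{a,g}$. Local full faithfulness of $S$ then shows that every $2$-cell $\gamma:u\Rightarrow v$ equals $\theta_v^{-1}\circ j(\hat\gamma)\circ\theta_u$ for a unique $2$-cell $\hat\gamma$ of $A\otimes B$. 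Any $2$-functor $F$ preserves composition and whiskering, so $F$ is fixed on $1$-cells by $F\circ j$ and $F(\delta_{f,g})$, and on $2$-cells by $F\circ j$ and the $F(\xi_{f,g})$. Note that $F(\delta_{f,g})$ is recorded as the domain of $F(\xi_{f,g})$. This is the determination claim.

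\textbf{Step 3: existence (if the statement is also meant as a correspondence).} Suppose we are given a $2$-functor $G:A\otimes B\to C$ and invertible $2$-cells $\xi'_{f,g}:x_{f,g}\Rightarrow G(f,b')\circ G(a,g)$. We define $F$ on $1$-cells freely, which is possible because the $1$-cells of $A\diamond B$ are free on the added diagonals over $A\otimes B$. On a $2$-cell $\gamma$ we set
\[
F(\gamma)=F(\theta_v)^{-1}\circ G(\hat\gamma)\circ F(\theta_u).
\]

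\textbf{Main obstacle.} The hard part is checking that this $F$ respects vertical composition and, especially, horizontal composition and whiskering. Vertical composition follows from the uniqueness in Step 2, since $\theta$ is coherent under composition: $\theta_{vu}=\theta_v*\theta_u$. For horizontal composition, I would first try to show that $\hat{(\ )}$ is a strict $2$-functor $A\boxtimes B\to A\otimes B$ retracting $j$, with $\theta$ a $2$-natural isomorphism $1\Rightarrow j\hat{(\ )}$. Functoriality of $F$ then follows by conjugating $G\circ\hat{(\ )}$ with the isomorphisms $F(\theta)$.
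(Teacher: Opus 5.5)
Your argument is sound overall. The paper gives no proof beyond ``immediate from construction,'' and your Steps~1--2 are exactly the unpacking that remark needs. Since $R$ is bijective on objects and $1$-cells and $S$ and $Q$ are locally fully faithful, every $2$-cell of $A\boxtimes B$ has the form $\theta_v^{-1}\cdot j(\hat\gamma)\cdot\theta_u$, which gives uniqueness. Step~3 then supplies the existence half, which the paper tacitly relies on when it uses this lemma for the bijection in Theorem~\ref{2_cat_with_flex_prod_is_sym_mon_closed}.

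\textbf{Repairs needed in Step~3.} One claim there is false as stated: $\theta$ cannot be a $2$-natural isomorphism $1\Rightarrow j\circ\hat{(\ )}$. Your $\theta_u$ are $2$-cells indexed by $1$-cells, with identity object components. Strict $2$-naturality with identity components would force $j(\hat u)=u$, which fails for $u=\delta_{f,g}$ because the diagonals are freely adjoined. What you actually have is an invertible icon (a pseudonatural transformation with identity components), and that is enough.

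Also, writing $F(\theta_u)$ is circular before $F$ exists, so define $\Theta_u:F(u)\Rightarrow G(\hat u)$ in $C$ directly. The underlying-category functor $2\mathbf{Cat}\to\mathbf{Cat}$ has a right adjoint (same objects and $1$-cells, a unique $2$-cell between any parallel pair), so it preserves the defining pushouts. Hence each $1$-cell of $A\diamond B$ has a unique normal form $u=w_n\delta_n w_{n-1}\cdots\delta_1 w_0$ with the $w_i$ in $A\star B$. Let $\Theta_u$ be the horizontal composite of the corresponding $\xi'_{f,g}$, whiskered by the $G(w_i)$. Concatenating normal forms gives $\Theta_{vu}=\Theta_v*\Theta_u$. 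Then for $\gamma_k:u_k\Rightarrow v_k$, $k=1,2$, the interchange law yields
\[
\Theta_{v_2v_1}^{-1}\cdot G(\widehat{\gamma_2*\gamma_1})\cdot\Theta_{u_2u_1}
=\bigl(\Theta_{v_2}^{-1}\cdot G(\hat\gamma_2)\cdot\Theta_{u_2}\bigr)*\bigl(\Theta_{v_1}^{-1}\cdot G(\hat\gamma_1)\cdot\Theta_{u_1}\bigr),
\]
provided $\hat{(\ )}$ is a $2$-functor. That is easy to arrange. Define $\hat{(\ )}$ on $2$-cells by transport along $S$ and $Q$, using $\mathbf{st}(\hat u)=\mathbf{st}(u)$. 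Then check compatibility with both compositions after applying the locally faithful $Q$. Finally, $F(j\gamma)=G(\gamma)$, and $F(\xi_{f,g})=\xi'_{f,g}$ because $\hat\xi_{f,g}$ is an identity, so the correspondence is indeed a bijection.
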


\subsection*{Internal Homs}
We now discuss symmetric monoidal closed structures on $2\mathbf{Cat}$. We start by providing background on the two most used symmetric monoidal closed structures on $2\mathbf{Cat}$.

\begin{theorem}\label{cart_product}
The category $2\mathbf{Cat}$ of $2$-categories and strict functors together with the Cartesian
product forms a symmetric monoidal category.
\end{theorem}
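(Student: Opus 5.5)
The cartesian product is the categorical product in $2\mathbf{Cat}$, so I will obtain the symmetric monoidal structure from the general fact that any category with finite products is symmetric monoidal under the product, with unit the terminal object. The proof therefore reduces to two checks. First, $2\mathbf{Cat}$ has finite products, and they are computed by the componentwise construction of the Definition above. Second, the coherence data induced by the universal property agree with the evident componentwise maps.

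First I will show that $D^0$ is terminal. For any $2$-category $C$ there is a unique $2$-functor $C\to D^0$, since $D^0$ has exactly one object, one $1$-cell and one $2$-cell. Next I will show that $A\times B$, with its projections, is a product. Given $2$-functors $F:C\to A$ and $G:C\to B$, define $\langle F,G\rangle$ componentwise on objects, $1$-cells and $2$-cells. It preserves identities and both compositions because the structure of $A\times B$ is defined componentwise. It is the unique $2$-functor commuting with the projections, because a cell of $A\times B$ is determined by its two components. Products exist for every pair, so I fix this chosen product $\times$. A $2$-functor $F\times G$ between products is then $\langle F\pi_1,G\pi_2\rangle$, which makes $\times$ a bifunctor by uniqueness.

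Next I will define the structure maps by universal properties:
\[
\alpha_{A,B,C}=\langle \pi_1\pi_1,\langle \pi_2\pi_1,\pi_2\rangle\rangle,\qquad
\lambda_A=\pi_2,\qquad \rho_A=\pi_1,\qquad \sigma_{A,B}=\langle\pi_2,\pi_1\rangle .
\]
Concretely these are the maps $((a,b),c)\mapsto(a,(b,c))$, $(*,a)\mapsto a$, $(a,*)\mapsto a$ and $(a,b)\mapsto(b,a)$, acting the same way on $1$-cells and $2$-cells. Each has an evident inverse defined by the same kind of tupling, so each is an isomorphism. Naturality in every variable follows because both sides of each naturality square are morphisms into a product with the same projections. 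Uniqueness in the universal property then forces them to be equal.

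Finally I will verify the pentagon, the triangle, the hexagon and the condition $\sigma_{B,A}\sigma_{A,B}=1$. Each diagram is a pair of parallel $2$-functors whose target is an iterated product. The plan is to compose each side with every projection onto a factor and observe that both sides give the same projection of the source. By the uniqueness clause of the product, the two composites are equal. This is the only step with content, and it is where I expect the main obstacle: keeping the nested projections straight in the pentagon and the hexagon. To avoid it, I would note once that every composite of structure maps is a map $\langle \pi_{i_1},\dots,\pi_{i_n}\rangle$ determined entirely by its action on the factor indices. Any two composites with the same permutation of indices therefore coincide, and every coherence diagram commutes automatically.
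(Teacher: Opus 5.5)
Your proposal is correct: it is the standard argument that a category with chosen finite products is symmetric monoidal under the product with the terminal object as unit. Checking every coherence diagram by composing with the projections is the right method; in the triangle the composites discard the $D^0$ factor rather than permute factors, but the same projection check still applies. The paper gives no proof of this statement and records it only as standard background, so your argument supplies the justification the paper takes for granted.
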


When we consider the cartesian product, the internal hom has strict $2$-functors, strict transformations, and modifications for $0$-, $1$-, and $2$-cells, respectively. 

\begin{theorem}\label{Gray_product}
The category $2\mathbf{Cat}$ of $2$-categories and strict functors together with the Cartesian
product forms a symmetric monoidal category.
\end{theorem}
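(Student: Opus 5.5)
The statement, as worded, asserts only that $2\mathbf{Cat}$ with strict $2$-functors and the Cartesian product is a symmetric monoidal category. This is the same assertion as Theorem~\ref{cart_product}, even though the label suggests the Gray tensor product was intended. I will prove the statement exactly as written. The plan is to show that $2\mathbf{Cat}$ has all finite products, and then use the standard fact that any category with finite products is symmetric monoidal under the product, with the terminal object as unit.

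\textbf{Step 1: existence of finite products.} The terminal object is $D^0$, the $2$-category with one object, one $1$-cell and one $2$-cell; every $2$-category admits a unique strict $2$-functor to it. For binary products I will take the componentwise $2$-category $A\times B$ defined earlier in the paper. Its projections $\pi_A,\pi_B$ are strict $2$-functors because identities and both compositions are defined componentwise. Given strict $2$-functors $F:C\to A$ and $G:C\to B$, the pairing $\langle F,G\rangle$ is defined on $0$-, $1$- and $2$-cells by $x\mapsto(Fx,Gx)$. It is strict, again because everything is componentwise, and it is the unique $2$-functor whose composites with the projections are $F$ and $G$. So $A\times B$ is the categorical product.

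\textbf{Step 2: the monoidal data.} Every structure map is taken to be the canonical comparison map between products, determined by its components:
\[
\alpha_{A,B,C}=\langle \pi_A\pi_1,\langle \pi_B\pi_1,\pi_C\rangle\rangle,
\qquad
\lambda_A=\pi_2:D^0\times A\to A,
\]
\[
\rho_A=\pi_1:A\times D^0\to A,
\qquad
\sigma_{A,B}=\langle\pi_2,\pi_1\rangle .
\]
Each of these is invertible, with the evident inverse built in the same way. Each is natural in every variable, because for both sides of a naturality square the composites with the projections agree, and the universal property then forces the two sides to be equal.

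\textbf{Step 3: the coherence axioms.} The pentagon, the triangle, the two hexagons and $\sigma_{B,A}\sigma_{A,B}=1$ are each an equation between maps into an iterated product. Each side postcomposed with any projection yields the same projection of the source, so by uniqueness in the universal property each equation holds.

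The only step needing any care is Step 1, namely checking that the componentwise construction really satisfies the universal property in $2\mathbf{Cat}$ with \emph{strict} $2$-functors. Strictness is what makes the pairing well defined and unique, and it is immediate from the componentwise definition. Everything else is the formal finite-products argument. If instead the Gray tensor product was intended, the right reference is Gray's original construction; but as stated, the theorem reduces to the argument above.
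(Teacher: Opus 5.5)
Your proof is correct. The componentwise $A\times B$ is the categorical product in $2\mathbf{Cat}$ and $D^0$ is terminal, so the symmetric monoidal structure and all its coherence axioms follow from the universal property exactly as you describe.

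The paper gives no proof here at all. It lists this statement, a verbatim duplicate of Theorem~\ref{cart_product} and evidently a typo for the Gray tensor product, as standard background. You are right that if $\otimes$ was intended, your argument would not transfer: $\otimes$ is not a categorical product, so its coherence must come from Gray's original construction or from the Bourke--Gurski factorization.
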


For the Gray tensor product, the internal hom has strict $2$-functors, pseudonatural transformations, and modifications for $0$-, $1$-, and $2$-cells, respectively.

We write $[-,-]_\times$ and $[-,-]_g$ for the internal homs for the cartesian product and Gray tensor product, respectively. We shall now show that $2\mathbf{Cat}$ equipped with the flexible tensor product is closed. Let $A$ and $B$ be $2$-categories. We now define a new hom that is associated to the flexible tensor product. We define $[A,B]_f$ to have strict $2$-functors for $0$-cells, the $1$-cells are enhanced pseudonatural transformations of Lack from pp 6-7 of \cite{Lack2006}, and the $2$-cells are the modifications between the enhanced pseudonatural transformations.

\begin{theorem}\label{2_cat_with_flex_prod_is_sym_mon_closed}
    The category $2\mathbf{Cat}$ equipped with the flexible tensor product is a symmetric monoidal closed category with internal hom $[-,-]_f$.
\end{theorem}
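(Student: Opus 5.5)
The plan is to handle closedness first, via a natural bijection of hom-sets, and then to transport the symmetric monoidal coherence from the Gray tensor product using Lemma \ref{uni_prop_of_flex_prod}.

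\emph{Step 1: the adjunction bijection.} We must exhibit, for all $2$-categories $A,B,C$, a bijection
\[
2\mathbf{Cat}(A\boxtimes B,C)\cong 2\mathbf{Cat}(A,[B,C]_f),
\]
natural in $A$ and $C$. By Lemma \ref{uni_prop_of_flex_prod}, a strict $2$-functor $F:A\boxtimes B\to C$ amounts to two pieces of data:
\begin{itemize}
\item a strict $2$-functor $F\circ j:A\otimes B\to C$;
\item invertible $2$-cells $\xi_{f,g}:F(\delta_{f,g})\Rightarrow F(f,b')\circ F(a,g)$, one for each pair of non-identity $1$-cells $f$, $g$.
\end{itemize}
By the closedness of the Gray tensor product (Theorem \ref{Gray_product} and the remark following it), the first piece corresponds to a strict $2$-functor $\hat F:A\to[B,C]_g$. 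This sends a $1$-cell $f:a\to a'$ to a pseudonatural transformation $\hat F(f):F(a,-)\Rightarrow F(a',-)$.

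Lack's enhanced pseudonatural transformations (pp.\ 6--7 of \cite{Lack2006}) are pseudonatural transformations $\theta$ equipped, for each $1$-cell $g:b\to b'$, with a chosen $1$-cell $\theta_g:Fb\to Gb'$ and an invertible $2$-cell $\theta_g\cong Gg\circ\theta_b$, with $\theta_{1_b}=\theta_b$. The extra data of $F$, namely the $1$-cells $F(\delta_{f,g})$ and the $2$-cells $\xi_{f,g}$, is exactly what upgrades $\hat F(f)$ to an enhanced transformation. The $2$-cells $\delta_{\alpha,\beta}$, together with the relations of $A\diamond B$ reflected through the bo-lff factorization, supply compatibility of these enhancements with $2$-cells $\beta$ of $B$. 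Enhanced modifications arise from the $2$-cells $F(\alpha,b)$. I would set up this correspondence generator by generator in both directions, check that the two assignments are mutually inverse, and check that they are natural.

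\emph{Step 2: well-definedness of $[-,-]_f$.} We check that $[B,C]_f$ is a $2$-category and that $[-,-]_f$ is functorial, contravariant in $B$ via whiskering. Since composition of enhanced transformations composes the chosen diagonals $\theta_g$ by pasting, this is routine.

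\emph{Step 3: symmetric monoidal structure.} For the associator, unitors and symmetry, we define $\alpha_{A,B,C}$ on $(A\boxtimes B)\boxtimes C$ by Lemma \ref{uni_prop_of_flex_prod}. Its restriction to the Gray part is taken to be the Gray associator composed with $j$. The required invertible $2$-cells $\xi$ are built from the diagonal comparison isomorphisms, mirroring the generator-level formulas for $\alpha$ in the $1$-dimensional graph product. Invertibility follows by constructing the inverse symmetrically. Because the $\xi$-data of each constraint is uniquely determined by the requirement that $j$ be strictly compatible, the pentagon, triangle and hexagon identities can be checked on the Gray part, where they already hold, and on the diagonal generators, where they reduce to equalities of pasted isomorphisms.

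\emph{Main obstacle.} The hard part is Step 1. One must verify that the axioms on Lack's enhanced transformations match precisely the freeness of the data $(F\circ j,\xi)$. In particular, the $2$-dimensional diagonals $\delta_{\alpha,\beta}$ must impose neither more nor less than Lack's coherence conditions relating the enhancement to $2$-cells of $B$. I would first test this on $A=D^1$, computing $D^1\boxtimes B$ explicitly and checking that $2$-functors out of it are exactly the enhanced pseudonatural transformations. Since every $A$ is built from cells and $\boxtimes B$ preserves the relevant colimits, the general case would then be bootstrapped from $D^1$ and $D^2$ (and the corresponding spheres).
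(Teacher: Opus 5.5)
Your Step 1 follows the same route as the paper: split $F$ via Lemma~\ref{uni_prop_of_flex_prod} into $F\circ j$ plus diagonal data, apply Gray closedness, and read the diagonal data as Lack's enhancement. The identification at its core cannot be made.

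The diagonals of $A\boxtimes B$ are indexed by \emph{all} pairs of non-identity $1$-cells, composites included. No equation relates $\delta_{f_2f_1,g}$ to $\delta_{f_1,g}$, $\delta_{f_2,g}$ or $r_{f_2,b'}\circ\delta_{f_1,g}$. So a $2$-functor out of $A\boxtimes B$ chooses $F(\delta_{f_2f_1,g})$ and $\xi_{f_2f_1,g}$ freely. A strict $2$-functor $A\to[B,C]_f$, by contrast, must send $f_2f_1$ to the composite of the enhanced transformations at $f_1$ and $f_2$, and the chosen diagonals of that composite are fixed by the composition law of $[B,C]_f$.

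This is a genuine obstruction: your claim that $-\boxtimes B$ preserves the relevant colimits is false. Let $\mathbf{3}$ be the free $2$-category on $0\xrightarrow{f_1}1\xrightarrow{f_2}2$, so $\mathbf{3}\cong\mathbf{2}\sqcup_{D^0}\mathbf{2}$.
\begin{itemize}
\item The underlying category of $\mathbf{3}\boxtimes\mathbf{2}$ is free on ten edges: $l_{i,g}$, $r_{f_k,j}$, $\delta_{f_1,g}$, $\delta_{f_2,g}$ and $\delta_{f_2f_1,g}$.
\item Taking underlying categories preserves colimits, since it has a right adjoint (chaotic hom-categories). So the underlying category of $(\mathbf{2}\boxtimes\mathbf{2})\sqcup_{D^0\boxtimes\mathbf{2}}(\mathbf{2}\boxtimes\mathbf{2})$ is free on nine edges.
\item The comparison map sends these nine edges to the edges of $\mathbf{3}\boxtimes\mathbf{2}$ other than $\delta_{f_2f_1,g}$, so $\delta_{f_2f_1,g}$ is not in its image.
\end{itemize}
Hence $-\boxtimes\mathbf{2}$ does not preserve this pushout and has no right adjoint, for any choice of $[-,-]_f$. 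Your test case $A=D^1$ has no composable non-identity $1$-cells, so it cannot detect this.

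Step 3 fails for the same reason. Lemma~\ref{uni_prop_of_flex_prod} produces comparison $2$-functors $(A\boxtimes B)\boxtimes C\to A\boxtimes(B\boxtimes C)$, but not isomorphisms. The left side has a diagonal $\delta_{u,h}$ for every $1$-cell $u$ of $A\boxtimes B$, the right side a diagonal $\delta_{f,v}$ for every $1$-cell $v$ of $B\boxtimes C$, and these sets do not match.

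Take $A=\mathbf{3}$ and $B=C=\mathbf{2}$; both underlying categories are free. The generating $1$-cells $(0,0,0)\to(2,1,1)$ are:
\begin{itemize}
\item on the left, six cells $\delta_{u,h}$, with $u$ ranging over the six $1$-cells $(0,0)\to(2,1)$ of $\mathbf{3}\boxtimes\mathbf{2}$;
\item on the right, only three cells $\delta_{f_2f_1,v}$, with $v$ ranging over the $1$-cells $(0,0)\to(1,1)$ of $\mathbf{2}\boxtimes\mathbf{2}$.
\end{itemize}
By naturality an associator is the identity on objects, and an isomorphism must biject indecomposable $1$-cells. So there is no inverse to ``construct symmetrically.'' The one-dimensional associator formulas you propose to mirror only cover $\delta_{u,h}$ for generators $u$ and omit composites.

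The statement therefore cannot be proved along these lines. The paper's own proof does not supply the missing idea either: it asserts the hom-set bijection ``from construction'' and never checks the monoidal axioms.
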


\begin{proof}
We shall prove that there is a bijection
\[
\phi:2\mathbf{Cat}(A\boxtimes B, C)\to 2\mathbf{Cat}(A,[B,C]_f)
\]
natural in $A$, $B$, and $C$. Let $F:A\boxtimes B\to C$ be a strict $2$-functor. We first define $\phi(F)(a)$ to be the strict $2$-functor
\[
\phi(F)(a)=F(a,-)
\]
for all $0$-cells $a$. Given a $1$-cell $f:a\to a'$, we define 
\[
\phi(F)(f):F(a,-)\to F(a',)
\]
to be the enhanced pseudo-natural transformation defined to have underlying pseudo-natural transformation given by $F(f,-)$ and additional $1$-cell 
\[
F(\delta_{f,g}):F(a,b)\to F(a',b')
\]
equipped with a $2$-cell isomorphism 

\[
F(\xi_{f,g}): F(\delta_{f,g})\Rightarrow F(f,b')\circ F(a,g)
\]
for all $1$-cells $g:b\to b'$ in $B$. Given a $2$-cell $\chi:f\to f'$, we define $\phi(F)(\chi)$ to  be the modification
\[
\phi(F)(\chi)=F(\chi,-).
\]
Naturality and bijectivity follows from construction, a diagram chase that invokes Lemma \ref{uni_prop_of_flex_prod}, and using the fact that $(\otimes, [-,-]_g)$ is a tensor-hom pair on $2\mathbf{Cat}$.
\end{proof}

\subsection{Restriction to Flexible Two Categories}
Write $\mathbf{Flex}$ to denote the category of flexible $2$-categories introduced in \cite{Campbell2026}. There is a forgetful functor $U:\mathbf{Flex}\to 2\mathbf{Cat}$ that is left adjoint to the path $2$-category functor $Q:2\mathbf{Cat}\to\mathbf{Flex}$. Campbell showed the funny tensor product $\star$ and the Gray tensor product $\otimes$ restricts to $\mathbf{Flex}$. We shall know show that the graph product $\diamond$ and the flexible tensor product $\boxtimes$ restricts to $\mathbf{Flex}$. Moreover, we show when we restrict $\boxtimes$ to $\mathbf{Flex}$, it is the cartesian product of $\mathbf{Flex}$ introduced by Campbell.

\begin{proposition}
Let $A$ and $B$ be flexible $2$-categories. Then the graph tensor
product $A\diamond B$ is again an object of $\mathbf{Flex}$.
\end{proposition}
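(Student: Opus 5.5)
The plan is to use the characterization of flexible $2$-categories in \cite{Campbell2026}: a $2$-category is flexible exactly when its underlying category (forgetting $2$-cells) is free on a graph. Equivalently, it is a cofibrant object in Lack's model structure, i.e.\ a retract of a cell complex built from the generating cofibrations $\emptyset\to D^0$, $S^0\to D^1$, $S^1\to D^2$, $S^2\to D^2$. I would work with the first description, since the graph product is built by pushouts that add generators freely.

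First, show that $A\star B$ is flexible whenever $A$ and $B$ are. Campbell already records this, but I would recall the argument. Suppose the underlying categories of $A$ and $B$ are free on graphs $G_A$ and $G_B$. The underlying category of $A\star B$ is the funny tensor product of the underlying categories, since the funny tensor product of $2$-categories is computed as a pushout and the $2$-cells play no role in the $1$-dimensional part. That funny tensor product is free on the graph with vertices $\operatorname{ob}A\times\operatorname{ob}B$ and edges $r_{e,b}$ and $l_{a,e'}$ for $e\in G_A$, $e'\in G_B$.

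Second, handle the two pushouts defining $A\diamond B$.
\begin{itemize}
\item The pushout along $\coprod S^0\to\coprod D^1$ freely adjoins the $1$-cells $\delta_{f,g}$. Its underlying category is free on the previous graph together with the new edges $\delta_{f,g}$, so $A\diamond_p B$ is flexible.
\item The pushout along $\coprod S^1\to\coprod D^2$ adjoins only $2$-cells. These are attached along a boundary pair of parallel $1$-cells and impose no relations on $1$-cells, so the underlying category is unchanged and stays free.
\end{itemize}
Alternatively one can argue model-categorically: $S^0\to D^1$ and $S^1\to D^2$ are generating cofibrations in Lack's model structure \cite{Lack2002}, and cofibrations are closed under coproducts and pushouts. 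Hence $A\star B\to A\diamond B$ is a cofibration, and since $A\star B$ is cofibrant, so is $A\diamond B$.

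The main obstacle is being precise about the underlying-category functor $2\mathbf{Cat}\to\mathbf{Cat}$ and its interaction with these pushouts. This functor is not a left adjoint, so it does not automatically preserve colimits. I would handle it by first checking that attaching $D^2$ along $S^1$ leaves the $1$-skeleton untouched, which follows from the explicit description of free $2$-cell attachment: the new $2$-cells are formal whiskered composites and never identify $1$-cells. I would then check that attaching $D^1$ along $S^0$ is computed on underlying categories by freely adding an arrow.

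If this direct route proves fiddly, I would fall back on the cofibration argument, invoking Lack's identification of cofibrant $2$-categories with those whose underlying category is free. That identification makes the equivalence between the two descriptions of flexibility a citation rather than a computation.
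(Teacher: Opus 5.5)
Your proof is correct and uses the same criterion as the paper (flexible means free underlying category), but you establish freeness by a different mechanism.

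\textbf{Comparison with the paper.} The paper asserts that $U(A)\diamond U(B)$ is free because the graph product ``is the cartesian product on $\mathbf{Free}$''. It then appeals to $U\colon 2\mathbf{Cat}\to\mathbf{Cat}$ being a Grothendieck bifibration and calls $A\diamond B$ a $U$-cocartesian lift of $U(A)\diamond U(B)\to U(A\times B)$. You instead push the underlying category through the presentation $A\star B\to A\diamond_p B\to A\diamond B$. This makes explicit the identification $U(A\diamond B)\cong U(A)\diamond U(B)$, which the paper uses tacitly.

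Your explicit generating graph also helps. It contains $\delta_{f,g}$ for \emph{all} pairs of non-identity $1$-cells, so you never need to identify $U(A)\diamond U(B)$ with the product of reflexive graphs. That identification is not literally correct, since the reflexive-graph product has diagonals only for pairs of generating edges. Moreover, a cocartesian lift of a functor into $U(A\times B)$ lies over $U(A\times B)$, not over $U(A)\diamond U(B)$. So the real content of the paper's argument is exactly the freeness you prove directly. Your cofibration variant gives more: for arbitrary $A,B$, the map $A\star B\to A\diamond B$ is a pushout of coproducts of generating cofibrations, hence a cofibration in Lack's model structure.

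\textbf{One correction.} The underlying-category functor $U\colon 2\mathbf{Cat}\to\mathbf{Cat}$ \emph{is} a left adjoint. Its right adjoint sends a category $C$ to the $2$-category with the same objects and $1$-cells and exactly one $2$-cell between each parallel pair. A $2$-functor into this $2$-category is the same as a functor out of the underlying category, since all $2$-cell data is forced. So $U$ preserves all colimits, and your ``main obstacle'' disappears:
\begin{itemize}
\item $U$ sends the defining pushouts to pushouts in $\mathbf{Cat}$;
\item $U(S^0)\to U(D^1)$ is the inclusion $2\to\mathbf{2}$;
\item $U(S^1)\to U(D^2)$ is an isomorphism.
\end{itemize}
This gives, all at once, $U(A\star B)\cong U(A)\star U(B)$, the free adjoining of the arrows $\delta_{f,g}$, and the invariance of the underlying category under the $2$-cell attachments. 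There is no need to analyse whiskered composites by hand.
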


\begin{proof}
Let
\[
U:2\mathbf{Cat}\longrightarrow \mathbf{Cat}
\]
be the functor which sends a $2$-category to its underlying
category. Since $A$ and $B$ are flexible $2$-categories, their underlying
categories $U(A)$ and $U(B)$ are free categories. The graph product of the underlying categories therefore gives a free
category
\[
U(A)\diamond U(B),
\]
as it is the cartesian product on $\textbf{Free}$. There is a canonical morphism
\[
U(A)\diamond U(B)\longrightarrow U(A)\times U(B)\cong U(A\times B)
\]
determined on generators by assigning
\begin{align*}
l_{a,g}&\longmapsto (1_a,g),\\
r_{f,b}&\longmapsto (f,1_b),\\
\delta_{f,g}&\longmapsto (f,g).
\end{align*}

We now use the fact that $U$ is a Grothendieck bifibration and notice that the graph tensor product $A\diamond B$ in this case is the
$U$-cocartesian lift of this morphism. We may conclude that $A\diamond B$ is again a flexible
$2$-category.
\end{proof}

We provided a proof that is essentially dual usage to how Campbell defined the cartesian product for flexible $2$-categories. Speaking of that, we now show that flexible product restricts to $\mathbf{Flex}$ and is the cartesian product.

\begin{theorem}\label{restrict_of_flex_prod_is_cart_prod}
Let $A$ and $B$ be flexible $2$-categories. Then $A\boxtimes B$ is a flexible $2$-category and is the cartesian product of $\mathbf{Flex}$.
\end{theorem}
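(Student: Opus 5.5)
The plan has two parts. First I show that $A\boxtimes B$ is flexible. Then I identify it with Campbell's cartesian product in $\mathbf{Flex}$, which Campbell constructs as a $U$-cocartesian lift, where $U:2\mathbf{Cat}\to\mathbf{Cat}$ is the underlying-category functor.

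For flexibility, I use the $(\mathrm{bo},\mathrm{lff})$ factorization
\[
A\diamond B\xrightarrow{R}A\boxtimes B\xrightarrow{S}A\times B.
\]
The map $R$ is bijective on objects and on $1$-cells, so it induces an isomorphism of underlying categories, $U(A\boxtimes B)\cong U(A\diamond B)$. By the preceding proposition, $A\diamond B$ is flexible and $U(A\diamond B)\cong U(A)\diamond U(B)$ is free. Hence the underlying category of $A\boxtimes B$ is free, with generators $l_{a,g}$, $r_{f,b}$ and $\delta_{f,g}$ for non-identity generating $f,g$.

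The remaining condition for flexibility is the $2$-cell part. I would show that $A\boxtimes B$ is exactly the $U$-cocartesian lift of the canonical functor $U(A)\diamond U(B)\to U(A\times B)$. Since $S$ is locally fully faithful, its hom-categories are
\[
(A\boxtimes B)(x,y)(u,v)=(A\times B)(x,y)(Su,Sv).
\]
This is precisely the description of a cocartesian lift along $U$ (the $2$-cells are pulled back from the codomain), so it agrees with the cocartesian lift used in the proof of the previous proposition. Flexibility then follows as it did there.

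For the identification, I recall that Campbell's product $A\times_{\mathbf{Flex}}B$ is built the same way: take the free category on the product of the underlying reflexive graphs, which is the graph product on $\mathbf{Free}$, and lift cocartesianly along $U$ over $U(A\times B)$. By the previous paragraph, $A\boxtimes B$ and $A\times_{\mathbf{Flex}}B$ are cocartesian lifts of the same morphism from the same free category. They are therefore canonically isomorphic, compatibly with the projections. For completeness I would also check the universal property directly. Given a flexible $X$ with maps $X\to A$ and $X\to B$, the map $X\to A\times B$ sends each generating $1$-cell $h$ of the free category $U(X)$ to a pair $(f,g)$. I send $h$ to $\delta_{f,g}$, which reduces to $l$ or $r$ when one component is an identity. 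Since $X$ is free on generators, this extends uniquely on $1$-cells, and lff-ness of $S$ determines the $2$-cells uniquely.

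The main obstacle is this universal-property step. A flexible map $X\to A\boxtimes B$ must send generators to generators, and I must rule out sending $h$ to a composite such as $l\circ r$ instead of $\delta_{f,g}$. I would settle this using that morphisms in $\mathbf{Flex}$ are free morphisms, so they preserve generators. This forces the choice $\delta_{f,g}$ and gives uniqueness.
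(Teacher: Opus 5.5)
Your first step is sound and is exactly the paper's argument. Since $R$ is bijective on objects and $1$-cells, $U(A\boxtimes B)\cong U(A\diamond B)\cong U(A)\diamond U(B)$, which is free. If flexibility is a condition on the underlying category alone, as the paper uses it, this already proves that $A\boxtimes B$ is flexible.

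Two side corrections. Because $S$ is lff, $A\boxtimes B$ is the $U$-\emph{cartesian} lift of $U(S)$, as the paper says; pulling $2$-cells back from the codomain is cartesian, not cocartesian. It is also not the same lift as the one attached to $A\diamond B$ in the preceding proposition, since the $2$-cells differ: $\delta_{f,g}\cong r_{f,b'}\circ l_{a,g}$ holds in $A\boxtimes B$ but not in $A\diamond B$.

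The genuine gap is the identification with Campbell's product. You assert that the generators of $U(A\boxtimes B)$ are $l_{a,g}$, $r_{f,b}$ and $\delta_{f,g}$ for \emph{generating} $f,g$. But the pushout defining $A\diamond_p B$ adjoins $\delta_{f,g}$ for every pair of non-identity $1$-cells, composites included, subject only to the unit relations. So $U(A)\diamond U(B)$ is free on the $l_{a,g}$ and $r_{f,b}$ with $f,g$ generators, together with all $\delta_{f,g}$ with $f,g\neq 1$. The free category on the product of the underlying reflexive graphs, by contrast, has diagonals only for pairs of generators. Take $A=B$ to be the locally discrete $2$-category on the free category $0\xrightarrow{f_1}1\xrightarrow{f_2}2$. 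The former then has $6+6+9=21$ generators and the latter $6+6+4=16$. An isomorphism of free categories must biject the indecomposable morphisms, so the two are not isomorphic. Hence $A\boxtimes B$ and Campbell's product are not lifts of the same morphism from the same free category.

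This is also where your universal-property check fails, and not at the point you flagged. Uniqueness of maps into $A\boxtimes B$ is fine. The problem is that the cone is not in $\mathbf{Flex}$: the projection $A\boxtimes B\xrightarrow{S}A\times B\to A$ sends the generator $\delta_{f_2f_1,f_1}$ to the composite $f_2f_1$, so it is not a free morphism. The discrepancy is invisible when no two generators compose (e.g.\ $A=B=\mathbf{2}$).

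The paper's proof makes the same identification. It rests on its assertion that $\diamond$ restricts to the cartesian product on $\mathbf{Free}$, which is stated without proof and contradicted by the count above, so it does not supply the missing step either. To obtain the second half of the statement one would have to adjoin diagonals only for pairs of generating $1$-cells. That is precisely what your description of the generators tacitly assumes.
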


\begin{proof}
As $A\diamond B$ is a flexible $2$-category, $A\boxtimes B$ is forced to be as well since 
\[
R:A\diamond B\to A\boxtimes B
\]
is a boba map in $2\mathbf{Cat}$. As $S:A\boxtimes B\to A\times B$ is locally fully faithful, it follows from the paragraph preceding the construction of $\mathbf{Flex}$ in \cite{Campbell2026} that $A\boxtimes B$ is the $U$-cartesian lift of the morphism 
\[
U(A)\diamond U(B)\cong U(A\boxtimes B)\xrightarrow{U(S)} U(A\times B)
\]
in $2\mathbf{Cat}$. Therefore $A\boxtimes B$ is the cartesian product of flexible $2$-categories as defined by Campbell. 
\end{proof}

We now define the internal homs inherited. Let $A$ and $B$ be flexible $2$-categories. Campbell defined the internal hom $[A,B]_g$ on $\mathbf{Flex}$ with respect to the Gray tensor product as the $U$-cartesian lift to the top horizontal of the following pullback.

\begin{equation}
    \begin{tikzpicture}[baseline=(current bounding box.center)]
\node (TL) at (0,1.5)
{$U[A,B]_\mathbf{Flex}$};
\node (TR) at (4,1.5)
{$U[A,B]_{g}$};
\node (BL) at (0,0)
{$[UA,UB]_{\textbf{Free}}$};
\node (BR) at (4,0)
{$[UA,UB]_{\mathbf{Cat}}$};

\draw[->] (TL) to node[above]{} (TR);
\draw[->] (TL) to node[left]{} (BL);
\draw[->] (TR) to node[right]{} (BR);
\draw[->] (BL) to node[below]{} (BR);
\end{tikzpicture}
\end{equation}

We now define the internal hom $\mathbf{Flex}(-,-)$ on $\mathbf{Flex}$ with respect to the flexible tensor product as the $U$-cartesian lift to the top horizontal of the following pullback.

\begin{equation}
    \begin{tikzpicture}[baseline=(current bounding box.center)]
\node (TL) at (0,1.5){$U(\mathbf{Flex}(A,B))$};
\node (TR) at (4,1.5)
{$U[A,B]_{f}$};
\node (BL) at (0,0)
{$[UA,UB]_{\textbf{Free}}$};
\node (BR) at (4,0)
{$[UA,UB]_{\mathbf{Cat}}$};

\draw[->] (TL) to node[above]{} (TR);
\draw[->] (TL) to node[left]{} (BL);
\draw[->] (TR) to node[right]{} (BR);
\draw[->] (BL) to node[below]{} (BR);
\end{tikzpicture}
\end{equation}

The definition we gave here is equivalent to the definition provided by Campbell.

\section{Homotopy theory of Two Dimensional Structures}

\subsection*{Model structures}
We adopt the following notation borrowed from Lack \cite{Lack2002} throughout.

\begin{itemize}
    \item Let $\mathbf{E}$ denote the free-living equivalence with objects
    $x$ and $y$, $1$-morphisms
    \[
    s:x\to y,\qquad t:y\to x,
    \]
    and invertible $2$-morphisms
    \[
    ts\Rightarrow 1_x,
    \qquad
    st\Rightarrow 1_y.
    \]

    \item Let $2$ denote the discrete $2$-category on two objects.

    \item Let $\mathbf{2}$ denote the free $2$-category generated by a
    single $1$-morphism
    \[
    x\longrightarrow y.
    \]

    \item Let $\mathbf{D}$ denote the $2$-category obtained from
    $\mathbf{2}$ by freely adjoining a second parallel $1$-morphism
    together with an invertible $2$-morphism between them.

    \item Let $C_2$ denote the $2$-category with objects $x$ and $y$,
    non-identity $1$-morphisms
    \[
    s,s':x\to y,
    \]
    and non-identity $2$-morphisms
    \[
    \sigma_1,\sigma_2:s\Rightarrow s'.
    \]
    Let $C_1$ denote the sub-$2$-category containing $\sigma_1$ but not
    $\sigma_2$, and let $C_0$ denote the sub-$2$-category containing all
    $1$-morphisms but no non-identity $2$-morphisms.

    \item Let
    \[
    j_1:D^0\longrightarrow\mathbf{E}
    \]
    denote the map selecting the object $x$.

    \item Let
    \[
    j_2:\mathbf{2}\longrightarrow\mathbf{D}
    \]
    denote the map selecting the $1$-morphism $s$.

    \item Let
    \[
    i_1:\emptyset\longrightarrow D^0
    \]
    denote the unique map.

    \item Let
    \[
    i_2:2\longrightarrow\mathbf{2}
    \]
    denote the inclusion.

    \item Let
    \[
    i_3:C_0\longrightarrow C_1
    \]
    denote the inclusion.

    \item Let
    \[
    i_4:C_2\longrightarrow C_1
    \]
    denote the map sending both $\sigma_1$ and $\sigma_2$ to
    $\sigma_1$.

    \item Finally, define
    \[
    I=\{i_1,i_2,i_3,i_4\},
    \qquad
    J=\{j_1,j_2\}.
    \]
\end{itemize}

\begin{theorem}[{\cite[Theorem~6.3]{Lack2006}}]
The category $2\mathbf{Cat}$ admits a proper combinatorial model
structure in which the weak equivalences are the biequivalences, the
generating cofibrations are the maps in $I$, and the generating trivial
cofibrations are the maps in $J$.
\end{theorem}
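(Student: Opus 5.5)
We would establish the existence of this model structure by applying Kan's recognition theorem for cofibrantly generated model categories (in the form of Hovey, Theorem 2.1.19) with $W$ the class of biequivalences, $I=\{i_1,i_2,i_3,i_4\}$ and $J=\{j_1,j_2\}$. We then deduce combinatoriality and properness separately. The category $2\mathbf{Cat}$ is locally finitely presentable and every object is small. Each map in $I$ and $J$ has finitely presentable domain and codomain, so the small object argument applies to both sets. The class of biequivalences satisfies two-out-of-three and is closed under retracts. This follows directly from its characterization as the 2-functors that are surjective on objects up to equivalence, essentially surjective on 1-cells up to invertible 2-cell, and locally fully faithful on 2-cells.

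The first key step is to identify the right lifting classes. We would show the following:
\begin{itemize}
\item A map has the RLP with respect to $I$ if and only if it is surjective on objects, full on 1-cells, and locally fully faithful on 2-cells. Lifting against $i_1$, $i_2$, $i_3$ and $i_4$ gives surjectivity on objects, fullness on 1-cells, fullness on 2-cells, and faithfulness on 2-cells, respectively.
\item Such maps are exactly the maps that are both biequivalences and have the RLP with respect to $J$.
\item $J$-injectivity means being an equivalence-fibration: lifting against $j_1$ lifts adjoint equivalences with a specified domain, and lifting against $j_2$ lifts invertible 2-cells with a specified 1-cell.
\end{itemize}
From these characterizations, $I\text{-inj}=J\text{-inj}\cap W$ follows by a direct check. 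The one subtle point is that a biequivalence which is an equivalence-fibration is surjective on objects and full on 1-cells: we transport an equivalence $Fa\simeq c$ to a genuine preimage of $c$, and do the same for 1-cells using $j_2$.

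The main obstacle is the remaining condition, namely that every relative $J$-cell complex is a biequivalence, which gives $J\text{-cell}\subseteq W\cap I\text{-cof}$. Since $j_1$ and $j_2$ are $I$-cofibrations, the inclusion $J\text{-cell}\subseteq I\text{-cof}$ is formal. For the biequivalence part, we would first show directly that a pushout of $j_1$ (freely adjoining an equivalence at an object) or of $j_2$ (freely adjoining an isomorphic parallel 1-cell) is a biequivalence. We would do this by exhibiting an explicit pseudo-inverse 2-functor, or a retraction together with an invertible pseudonatural transformation. Second, we would show that biequivalences of this special form, which are injective on objects and admit such pseudo-retractions, are closed under transfinite composition. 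Since filtered colimits in $2\mathbf{Cat}$ are computed levelwise on cells, this reduces to the compatibility of the chosen retractions. We would try first to avoid the retractions altogether by using the path-object argument: construct a functorial path object $[\mathbf{E},B]_{ps}$ for every 2-category and check that all objects are fibrant. With that in hand, the standard criterion (Hovey, Brown's lemma variant) shows that $J$-cell maps, which have the LLP against all fibrations, are weak equivalences.

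Properness then follows from two facts. Every object is fibrant, which gives right properness. For left properness, we would observe that pushouts along cofibrations of biequivalences remain biequivalences; this is checked using that cofibrations are retracts of $I$-cell maps, which freely adjoin objects, 1-cells and 2-cells, and so do not disturb the existing hom-categories up to equivalence.
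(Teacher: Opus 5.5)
The paper does not prove this statement. It only quotes Lack's theorem, whose source is \cite{Lack2002}, so there is no internal argument to compare yours with.

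Your existence argument is the standard one and is sound. It consists of three pieces:
\begin{itemize}
\item the description of $I$-injectives as the maps that are surjective on objects, full on $1$-cells and locally fully faithful;
\item the equality $I\text{-inj}=J\text{-inj}\cap W$;
\item the path-object argument. All objects are fibrant, and the diagonal factors as $B\to[\mathbf{E},B]_{ps}\to B\times B$, a biequivalence followed by a $J$-injective map. Hence any $f$ with the left lifting property against $J$-injectives has a retraction $r$ with $fr$ a biequivalence, so $f$ is a retract of a biequivalence.
\end{itemize}
You still owe the two checks on $[\mathbf{E},B]_{ps}$. Here your reading of $\mathbf{E}$ as the free-living \emph{adjoint} equivalence is essential. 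If no triangle identities are imposed, as the paper's wording suggests, let $\eta\colon ts\Rightarrow 1_x$ and $\epsilon\colon st\Rightarrow 1_y$ be the generating isomorphisms. The composite $\eta\cdot(t\epsilon s)\cdot(ts\eta)^{-1}\cdot\eta^{-1}$ is then a non-identity automorphism of $1_x$. To see this, map into the $2$-category with one object, one $1$-cell and $2$-cells $\mathbb{Z}$, sending $\eta\mapsto 0$ and $\epsilon\mapsto 1$; the composite goes to $1$. So $j_1$ would not be a biequivalence.

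In your first route, no compatibility of retractions is needed. Filtered colimits in $2\mathbf{Cat}$ are computed on cells and being a biequivalence is a finitary condition, so transfinite composites of biequivalences are biequivalences.

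The genuine gap is left properness. You justify it by saying that $I$-cell maps ``freely adjoin objects, $1$-cells and $2$-cells, and so do not disturb the existing hom-categories up to equivalence''. This is wrong on both counts:
\begin{itemize}
\item pushouts of $i_4$ identify $2$-cells rather than adjoin anything;
\item cell attachments do change hom-categories; $i_3$ itself turns the discrete hom-category on $\{s,s'\}$ into the arrow category $s\to s'$.
\end{itemize}
What left properness needs is a comparison statement. For a biequivalence $w\colon A\to B$ and a cofibration out of $A$, attaching the ``same'' cells to $A$ and to $B$ must yield locally equivalent results.

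One workable route is the following.
\begin{itemize}
\item Factor $w=pj$ with $j$ a trivial cofibration and $p$ a trivial fibration. Pushouts of $j$ are trivial cofibrations by the model axioms, which are available once existence is proved.
\item Reduce, via retracts and cell filtrations, to showing that the pushout of a trivial fibration along a single pushout of some $i_k$ is again a trivial fibration. This class is stable under the filtered colimits that occur in a cell complex.
\item The cases $i_1$ and $i_2$ are easy, since the hom-categories of $A[u]$ are coproducts of products of hom-categories of $A$.
\item For $i_3$ you must check that the $2$-cells freely generated by $\theta$ in $A$ correspond exactly to those generated by $p\theta$ in $B$. For $i_4$ you must check the same for the congruence generated by $\sigma_1=\sigma_2$ versus $p\sigma_1=p\sigma_2$.
\item These checks use that $p$ is surjective on objects, full on $1$-cells and locally fully faithful, and that whiskering by isomorphic $1$-cells gives conjugate $2$-cells.
\end{itemize}
This is the real content of left properness, and it is absent from your sketch. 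Right properness, by contrast, does follow from every object being fibrant, as you say.
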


The model structure here is called \emph{Lack's model structure}. Campbell constructed an analogous model structure on the category of flexible $2$-categories.

\begin{theorem}[{\cite[Theorem~4.9]{Campbell2026}}]
The category $\mathbf{Flex}$ admits a combinatorial model structure in
which a flexible morphism is a weak equivalence if and only if it is a
biequivalence, and is a cofibration if and only if it is injective on
objects and faithful. Moreover, every object of $\mathbf{Flex}$ is
cofibrant.
\end{theorem}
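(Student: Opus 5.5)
The model structure can be built directly on $\mathbf{Flex}$ using J.~Smith's recognition theorem for combinatorial model categories, so that the trivial cofibrations never have to be described explicitly. Everything will be compared back to Lack's model structure through the forgetful functor $U:\mathbf{Flex}\to 2\mathbf{Cat}$. Since $U$ is left adjoint to the path $2$-category functor $Q$, it preserves all colimits.

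The steps, in order, are as follows.

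\begin{enumerate}
\item \emph{Local presentability.} I would show that $\mathbf{Flex}$ is locally presentable. A flexible $2$-category is a $2$-category whose underlying category is free on a reflexive graph, so $\mathbf{Flex}$ should be described as a category of models of a finite limit sketch: a reflexive graph together with $2$-cell data over paths.

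\item \emph{Generating cofibrations.} I would take the following set $I_{\mathbf{Flex}}$:
\begin{itemize}
\item the maps $\emptyset\to D^0$ and $2\to\mathbf{2}$;
\item the flexible replacements of $C_0\to C_1$ and $C_2\to C_1$;
\item where needed, the maps adjoining a free generating $1$-cell between existing objects.
\end{itemize}
The aim is to show that the class $\operatorname{cof}(I_{\mathbf{Flex}})$ is exactly the class of flexible maps that are injective on objects and faithful. I would prove this with a cell-complex argument. Given such a map $A\to B$, the extra objects of $B$ are attached by $\emptyset\to D^0$. The extra generating $1$-cells are attached by $2\to\mathbf{2}$; freeness of $UB$ makes this possible. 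The extra $2$-cells and relations among $2$-cells are attached by the remaining two maps, and faithfulness ensures that no existing $2$-cells are identified. The converse inclusion holds because this class of maps is closed under pushout, transfinite composition and retract.

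\item \emph{Cofibrant objects and weak equivalences.} Every object is then cofibrant, since $\emptyset\to A$ is vacuously injective on objects and faithful. The biequivalences form an accessible, accessibly embedded class satisfying $2$-out-of-$3$ and closed under retracts. This follows because they are created by $U$ and are accessible in Lack's model structure.

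\item \emph{Smith's conditions.} It remains to verify two things. First, every map with the right lifting property against $I_{\mathbf{Flex}}$ is a biequivalence. Lifting against these maps forces surjectivity on objects, fullness on generating $1$-cells, and full faithfulness on $2$-cells, which together give a biequivalence. Second, the class of cofibrations that are biequivalences must be closed under pushout and transfinite composition.
\end{enumerate}

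The second Smith condition is the main obstacle. To handle it, I would show that $U$ sends a flexible map that is injective on objects and faithful to a cofibration in Lack's model structure. Lack's cofibrations are retracts of $I$-cell complexes, and the cell decomposition from step 2 maps under $U$ onto such a complex, using that free $1$-cells are adjoined freely. Because $U$ preserves colimits, a pushout or transfinite composite in $\mathbf{Flex}$ is computed in $2\mathbf{Cat}$. Trivial cofibrations in Lack's model structure are stable under these operations, and biequivalences are created by $U$. Hence the resulting maps are again trivial cofibrations in $\mathbf{Flex}$.

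If the identification of $U$-images with Lack cofibrations turns out to be delicate, my fallback is a direct argument. Pushing out a biequivalence that is injective on objects and faithful along an arbitrary flexible map, I would check by hand that each object of the pushout is equivalent to one in the image, and that the hom-categories remain equivalent. Lack's left properness would serve as a guide for this computation.
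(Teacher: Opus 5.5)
Your overall strategy is sound: Smith's recognition theorem, with the pushout/transfinite-composition condition transferred to Lack's structure along the colimit-preserving $U$. But the generating set in step 2 is wrong, and with it both step 2 and the first Smith condition in step 4 fail.

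A morphism of $\mathbf{Flex}$ must send generating $1$-cells to generating $1$-cells or identities. So a map $C_0\to A$ or $C_2\to A$ in $\mathbf{Flex}$ can only pick out a pair of parallel \emph{generators} (or identities) of $A$. Passing to ``flexible replacements'' changes nothing, because $C_0$, $C_1$, $C_2$ are already flexible and the maps between them are already in $\mathbf{Flex}$. Pushouts of your maps therefore only attach or identify $2$-cells between generators.

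Here is a concrete failure. Let $A$ be the free $2$-category on $f:x\to y$, $g:y\to z$, $h:x\to z$. Let $B$ be $A$ with a freely adjoined $2$-cell $\alpha:gf\Rightarrow h$. The inclusion $p:A\to B$ lies in $\mathbf{Flex}$, is bijective on objects, and is an isomorphism on underlying categories. The only $2$-cells of $B$ between generators or identities are identities, so $p$ has the right lifting property against all of $I_{\mathbf{Flex}}$. But $A(x,z)\to B(x,z)$ is not full, so $p$ is not a biequivalence, which contradicts your first Smith condition. Moreover $p$ is injective on objects and faithful, yet it cannot lie in $\operatorname{cof}(I_{\mathbf{Flex}})$: if it did, it would lift against itself and hence be an isomorphism. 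Your remark that faithfulness prevents existing $2$-cells from being identified is also off. ``Faithful'' only concerns $1$-cells, and $C_2\to C_1$, which identifies $2$-cells, is itself a cofibration.

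The missing idea is that the $2$-dimensional generators must be indexed by path lengths. For $m,n\ge 0$, let $P_{m,n}$ be the free $2$-category on two parallel paths of lengths $m$ and $n$; if $m=0$ this is a loop of length $n$. Use the maps $P_{m,n}\to P_{m,n}+\{\sigma\}$ adjoining a $2$-cell between the two composites, and the map from $P_{m,n}$ with two such parallel $2$-cells to $P_{m,n}$ with one. Every pair of parallel $1$-cells in a flexible $2$-category is the image of the two boundary paths under some morphism of $\mathbf{Flex}$ out of some $P_{m,n}$. With this countable set, the following all go through:
\begin{itemize}
\item your cell decomposition;
\item your computation of $\operatorname{inj}(I_{\mathbf{Flex}})$: surjective on objects, full on $1$-cells (by lifting paths generator by generator), and locally fully faithful;
\item your transfer argument via $U$, since each new generator maps to a pushout of $C_0\to C_1$ or $C_2\to C_1$ in $2\mathbf{Cat}$.
\end{itemize}

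For comparison, the paper does not reprove this result. It cites Campbell and notes that the structure is left-induced from Lack's along $U\dashv Q$. That route never needs explicit generators:
\begin{itemize}
\item cofibrations and weak equivalences are by definition the $U$-preimages of Lack's;
\item every object is cofibrant because $U$ lands in Lack-cofibrant $2$-categories;
\item the description of cofibrations as injective-on-objects-and-faithful maps is a separate check on underlying reflexive graphs.
\end{itemize}
The only substantive input there is the acyclicity condition, which is exactly your first Smith condition, the step where your current generating set breaks.
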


Campbell's model structure is left induced from Lack's model structure
on $2\mathbf{Cat}$ along the forgetful functor
\[
U:\mathbf{Flex}\longrightarrow2\mathbf{Cat}.
\]
Moreover, the adjunction between $\mathbf{Flex}$ and $2\mathbf{Cat}$ is
a Quillen equivalence.

\begin{theorem}[{\cite[Theorem~4.10]{Campbell2026}}]
The adjunction
\[
\begin{tikzpicture}[node distance=4cm]
\node (A) {$\mathbf{Flex}$};
\node (B) [right of=A] {$2\mathbf{Cat}$};

\draw[transform canvas={yshift=0.6ex},->] (A) -- node[above] {$U$} (B);
\draw[transform canvas={yshift=-0.6ex},<-] (A) -- node[below] {$Q$} (B);
\end{tikzpicture}
\]
is a Quillen equivalence.
\end{theorem}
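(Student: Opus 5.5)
The statement is Campbell's Theorem 4.10: the adjunction $U\dashv Q$ between $\mathbf{Flex}$ and $2\mathbf{Cat}$ is a Quillen equivalence. The plan has two steps: first show $U\dashv Q$ is a Quillen pair, then verify the derived unit/counit criterion.

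\textbf{Quillen pair.} Campbell's model structure on $\mathbf{Flex}$ is left induced from Lack's model structure along $U$. Hence a flexible morphism is a cofibration, respectively a weak equivalence, precisely when its image under $U$ is one in $2\mathbf{Cat}$. In particular $U$ preserves cofibrations and trivial cofibrations, so $U\dashv Q$ is a Quillen adjunction with $U$ the left adjoint.

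\textbf{Quillen equivalence.} Every object of $2\mathbf{Cat}$ is fibrant in Lack's model structure, and every object of $\mathbf{Flex}$ is cofibrant. So the criterion reduces to the following: for every flexible $A$ and every $2$-category $B$, a map $UA\to B$ is a biequivalence if and only if its adjunct $A\to QB$ is a weak equivalence in $\mathbf{Flex}$. Since $U$ creates weak equivalences, $A\to QB$ is a weak equivalence exactly when $UA\to UQB$ is a biequivalence. The adjunct map $UA\to B$ factors as
\[
UA\longrightarrow UQB\xrightarrow{\ \varepsilon_B\ } B .
\]
By two-out-of-three for biequivalences, both directions of the criterion follow once we know that the counit $\varepsilon_B:UQB\to B$ is a biequivalence for every $2$-category $B$.

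\textbf{Counit (main obstacle).} It remains to show $\varepsilon_B$ is a biequivalence, which is where the construction of $Q$ enters. $QB$ is the path $2$-category: its underlying category is the free category on the underlying graph of $B$, with $2$-cells lifted cartesianly along the composition map.

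1. $\varepsilon_B$ is the identity on objects, hence bijective on objects.
2. It is surjective on $1$-cells, since each $1$-cell of $B$ is a path of length one.
3. It is locally fully faithful: the $U$-cartesian lift has hom-categories defined so that $2$-cells between paths are exactly the $2$-cells in $B$ between their composites.

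A $2$-functor that is bijective on objects, surjective on $1$-cells, and locally an isomorphism of categories is a biequivalence. This is the same argument used in Lemma~\ref{biequival_between_products} for $j$.

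The delicate point is item 3. It requires unwinding the cartesian-lift description of $Q$ precisely enough to identify each hom-category $QB(x,y)$ with a full subcategory of $B(x,y)$, reindexed along the surjection from paths to composites. Once that identification is in place, the result follows.
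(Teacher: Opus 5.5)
The paper gives no proof of this statement. It is imported from Campbell's Theorem~4.10, and the only accompanying justification is the remark that the model structure on $\mathbf{Flex}$ is left induced along $U$. Your argument is a correct, self-contained replacement for that citation.

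\begin{itemize}
\item Left induction makes $U$ create cofibrations and weak equivalences, so $U\dashv Q$ is a Quillen pair.
\item Every object of $\mathbf{Flex}$ is cofibrant, every object of $2\mathbf{Cat}$ is fibrant, and $U$ reflects weak equivalences.
\item Factoring the adjunct of $g\colon A\to QB$ as $\varepsilon_B\circ U(g)$ and applying two-out-of-three then reduces everything to the counit $\varepsilon_B\colon UQB\to B$ being a biequivalence.
\end{itemize}

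This is the usual criterion: the left adjoint reflects weak equivalences and the derived counit is a weak equivalence. What your route buys over the citation is that it isolates the single concrete fact about $Q$ on which the theorem depends.

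You should correct one misstatement. $\varepsilon_B$ is \emph{not} locally an isomorphism of categories, and $QB(x,y)$ is not a subcategory of $B(x,y)$. Its objects are paths, and distinct paths with the same composite are distinct objects with the same image. For example, the length-two path $(f,g)$ and the length-one path $(gf)$ both map to $gf$. So the hom-functor $QB(x,y)\to B(x,y)$ is not injective on objects.

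What is true, and all you need, is the following:
\begin{itemize}
\item The hom-functor is fully faithful: a $2$-cell $p\Rightarrow q$ in $QB$ is exactly a $2$-cell between the composites in $B$.
\item It is surjective on objects, via length-one paths.
\item Hence it is an equivalence of categories. Together with bijectivity on objects, this makes $\varepsilon_B$ a biequivalence; indeed it is a trivial fibration in Lack's model structure (surjective on objects, full on $1$-cells, fully faithful on $2$-cells).
\end{itemize}

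Nor is this step delicate. By construction, $\varepsilon_B$ is the cartesian lift, for the underlying-category fibration $2\mathbf{Cat}\to\mathbf{Cat}$, of the counit of the free-category adjunction. Cartesian $2$-functors for that fibration are precisely the locally fully faithful ones, and that counit functor is bijective on objects and surjective on arrows.
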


\subsection*{Monoidal Model Structures}
The Gray tensor product provides the standard monoidal structure on
$2\mathbf{Cat}$ that is compatible with Lack's model structure. In addition, the Gray tensor product restricts to make $\mathbf{Flex}$ a symmetric monoidal model category. $\mathbf{Flex}$ has the structure of a monoidal model category when equipped with the flexible tensor product. We now recall these facts.

\begin{theorem}[{\cite[Theorem~7.5]{Lack2002}}]
The category $2\mathbf{Cat}$ equipped with the Lack model structure and
the Gray tensor product is a monoidal model category.
\end{theorem}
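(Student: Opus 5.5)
The statement is Lack's theorem: $2\mathbf{Cat}$ with Lack's model structure and the Gray tensor product $\otimes$ is a monoidal model category. It is cited from \cite{Lack2002}, so I would reconstruct the standard argument. Two things must be checked.
\begin{itemize}
\item \emph{Pushout--product axiom:} for cofibrations $c:X\to Y$ and $c':X'\to Y'$, the map
\[
c\,\square\, c' : (Y\otimes X')\cup_{X\otimes X'}(X\otimes Y')\longrightarrow Y\otimes Y'
\]
is a cofibration, and it is trivial if either $c$ or $c'$ is.
\item \emph{Unit axiom:} this is automatic, because the unit $D^0$ is cofibrant ($i_1:\emptyset\to D^0$ is a generating cofibration).
\end{itemize}
Since $\otimes$ is closed with internal hom $[-,-]_g$, the functor $-\otimes-$ preserves colimits in each variable. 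A standard adjunction argument then reduces the pushout--product axiom to the case where $c$ and $c'$ range over the generating sets $I$ and $J$. By symmetry of $\otimes$ we only need to check pairs in $I\times I$ and pairs in $J\times I$.

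\emph{Cofibration part.} I would use Lack's characterization of cofibrations: these are the maps that are injective on objects and whose underlying map of categories is a retract of a free extension, i.e.\ one obtained by freely adjoining $1$-cells. Then I would work through the pairs $(i_k,i_l)$ one at a time.
\begin{itemize}
\item Any pair involving $i_1$ is trivial, since $\emptyset\otimes -=\emptyset$ and $D^0\otimes -\cong \mathrm{id}$; the pushout--product is then the other map itself.
\item For $(i_2,i_2)$, the map $\mathbf{2}\otimes\mathbf{2}$ is the Gray square: a free $2$-category with an invertible $2$-cell filling the square. The domain of the pushout--product is its boundary, and the inclusion is a pushout of $i_2$ and $i_3$ followed by adjoining an invertible $2$-cell. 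Adjoining an invertible $2$-cell is a pushout of $C_0\to C_1$ composed with a quotient, and I would verify that this is still a cofibration by lifting against trivial fibrations. Those are the surjective-on-objects, full-on-$1$-cells, fully faithful-on-$2$-cells maps.
\item For $(i_2,i_3)$, $(i_2,i_4)$ and the $i_3,i_4$ pairs, the pushout--products only adjoin or identify $2$-cells. This is because $\otimes$ with a $2$-cell introduces no new $1$-cells beyond the free composites. These maps are bijective on objects and on $1$-cells, and they lift against trivial fibrations directly, since those are fully faithful on $2$-cells.
\end{itemize}

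\emph{Acyclic part.} It suffices that $j\,\square\, i$ is a biequivalence for $j\in J$ and $i\in I$. Here I would use that every object is fibrant in Lack's structure, together with the 2-out-of-3 property. The key lemma is that $-\otimes K$ preserves biequivalences between cofibrant objects, and in particular sends $j_1$ and $j_2$ to trivial cofibrations. I would prove this via the internal hom: $[K,L]_g\to[X,L]_g$ is a fibration for a cofibration $X\to K$, because pseudonatural transformations lift along equivalences. By adjunction, $j\otimes K$ then has the left lifting property against fibrations. The pushout of $j\otimes X'$ is then a trivial cofibration, and by 2-out-of-3 with $j\otimes Y'$ the map $j\,\square\, i$ is a weak equivalence.

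\emph{Main obstacle.} The main obstacle is the cofibration check for $(i_2,i_2)$ and for $(i_2,i_4)$. There one must verify that the boundary inclusion into the Gray square, and its quotient variants, have the left lifting property against trivial fibrations. This requires explicitly constructing lifts of the invertible $2$-cell, using full faithfulness on $2$-cells. I would do this dually: I would show that $[\mathbf{2},L]_g\to[2,L]_g$ preserves trivial fibrations, by lifting pseudonatural transformations componentwise.
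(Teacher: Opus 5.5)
\textbf{The gap is in the acyclic half.} Your proposed lemma says that $[K,L]_g\to[X,L]_g$ is a fibration for every cofibration $X\to K$. Under the tensor--hom adjunction this transposes to the statement that $j\,\square\,(X\to K)$ lifts against the maps $L\to 1$. In the case you need, $X=\emptyset$, it says only that $j\otimes K$ lifts against $L\to 1$, not against all fibrations. The fact that every object is fibrant does not close this. For example, the inclusion of $D^0$ into the one-object $2$-category freely generated by an endo-$1$-cell is a cofibration. It lifts against every $L\to 1$ (send the generator to an identity), yet it is not a biequivalence.

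What the adjunction actually needs is that $[K,p]_g\colon[K,L]_g\to[K,M]_g$ be a fibration for every fibration $p$ and every cofibrant $K$. That is true, but ``pseudonatural transformations lift along equivalences'' does not prove it. Lifting a pseudonatural equivalence $pF\simeq G$ requires constructing a \emph{strict} $2$-functor $\bar G$ with $p\bar G=G$ on the nose. You would have to define $\bar G$ on generating $1$-cells using freeness of the underlying category of $K$, and then on $2$-cells by transport along the lifted equivalences. As written, the step ``by adjunction, $j\otimes K$ has the left lifting property against fibrations'' is unsupported.

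\textbf{How the paper's source avoids this.} The paper simply cites Lack. The argument it relies on (see its use of \cite[Lemma~7.4]{Lack2002} and of the fact that $Q$ is a biequivalence) sidesteps the issue entirely. The comparison $Q\colon A\otimes B\to A\times B$ is bijective on objects, surjective on $1$-cells and locally fully faithful, hence a biequivalence, naturally in $A$ and $B$. Since $j\times K$ is a biequivalence, two-out-of-three shows that $j\otimes K$ is one, for arbitrary $K$. Combined with $j\otimes K$ being a cofibration when $K$ is cofibrant, your pushout and two-out-of-three finish then goes through unchanged.

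\textbf{The cofibration half is fine, and easier than you expect.} Three facts do the work:
\begin{itemize}
\item taking underlying categories preserves colimits, and the underlying category of $A\otimes B$ is the funny tensor product of the underlying categories;
\item the underlying functors of $i_3$ and $i_4$ are isomorphisms;
\item $\mathbf{2}\star\mathbf{2}$ is the free category on the boundary square.
\end{itemize}
Hence every $i_k\,\square\,i_l$ with $k,l\ge 2$ is bijective on objects and $1$-cells, and so lifts against trivial fibrations by local full faithfulness. This includes $(i_2,i_2)$, where no $1$-cells are adjoined, so it is not really an obstacle.
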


\begin{theorem}[{\cite[Theorem~7.3]{Campbell2026}}]
The category $\mathbf{Flex}$ equipped with the left-induced model
structure and the Gray tensor product is a monoidal model category.
\end{theorem}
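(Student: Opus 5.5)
The plan is to deduce the monoidal model structure on $\mathbf{Flex}$ with the Gray tensor product from the corresponding result on $2\mathbf{Cat}$ (\cite[Theorem~7.5]{Lack2002}), using three facts recalled above: the model structure on $\mathbf{Flex}$ is left induced along $U:\mathbf{Flex}\to2\mathbf{Cat}$, every object of $\mathbf{Flex}$ is cofibrant, and $\otimes$ restricts to $\mathbf{Flex}$. Two things must be checked: the pushout-product axiom, and the unit axiom.

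\emph{Unit axiom.} The unit $D^0$ is flexible, and every object of $\mathbf{Flex}$ is cofibrant. So the unit axiom holds trivially, because no cofibrant replacement of the unit is needed.

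\emph{Pushout-product axiom.} Let $c:A\to B$ and $c':A'\to B'$ be cofibrations in $\mathbf{Flex}$. We must show that the pushout-product $c\,\square\, c'$, formed in $\mathbf{Flex}$, is a cofibration, and is trivial if either $c$ or $c'$ is.

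\begin{enumerate}
\item Show that $U$ preserves the relevant pushouts and commutes with $\otimes$. Since $U$ is a left adjoint (to the path functor $Q$), it preserves all colimits. Since Campbell's Gray product on $\mathbf{Flex}$ is the restriction of the one on $2\mathbf{Cat}$, we have $U(c\,\square\,c')\cong Uc\,\square\,Uc'$.
\item Since the model structure is left induced, cofibrations and weak equivalences in $\mathbf{Flex}$ are exactly the maps sent by $U$ to cofibrations and weak equivalences of $2\mathbf{Cat}$. By definition of left induction, $Uc$ and $Uc'$ are Lack cofibrations, trivial if $c$ or $c'$ is.
\item Lack's theorem then gives that $Uc\,\square\,Uc'$ is a cofibration, trivial when appropriate. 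Reflecting back along $U$ proves the axiom.
\end{enumerate}

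\emph{Symmetry and closedness.} Symmetry comes for free from the symmetric monoidal structure on $2\mathbf{Cat}$. Closedness uses the internal hom $[A,B]_\mathbf{Flex}$ built above as the $U$-cartesian lift. Its adjunction with $\otimes$ follows from the adjunction $(\otimes,[-,-]_g)$ on $2\mathbf{Cat}$ together with the universal property of cartesian lifts. This holds because a $2$-functor out of a flexible $2$-category into $[A,B]_g$ factors through the lift exactly when its underlying functor is free.

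\emph{Expected main obstacle.} The hardest step is the first: checking that $U$ strictly commutes with $\otimes$, and that pushouts in $\mathbf{Flex}$ are computed in $2\mathbf{Cat}$. The latter requires that a pushout of flexible $2$-categories along free morphisms is again flexible, with underlying free category the pushout in $\mathbf{Free}$. I would handle this by noting that $U$ has a right adjoint and hence creates colimits in the sense needed. I would also verify directly that the Gray product of $2$-categories with free underlying categories has free underlying category, generated by the $l$- and $r$-cells; this is Campbell's restriction result.
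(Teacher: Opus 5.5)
The paper does not prove this statement; it quotes it from Campbell's Theorem~7.3. There is therefore nothing to compare against, and your route is an independent one.

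\textbf{Pushout-product and unit axioms.} This part of your argument is correct, and it makes the result a formal consequence of Lack's Theorem~7.5.
\begin{itemize}
\item $U$ is a left adjoint, so it preserves the pushout defining $c\,\square\,c'$.
\item $\otimes$ on $\mathbf{Flex}$ is the restriction of the Gray product, so $U(c\,\square\,c')\cong Uc\,\square\,Uc'$ as arrows.
\item Left induction means $U$ creates cofibrations and weak equivalences, hence trivial cofibrations, so the axiom reflects back along $U$.
\end{itemize}
The unit axiom is automatic because $D^0$ is cofibrant. Strictly, a cofibrant replacement $QD^0\to D^0$ is then a weak equivalence between cofibrant objects, and $-\otimes X$ preserves it by Ken Brown once the pushout-product axiom holds. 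Your ``main obstacle'' is not one: you need $U$ only to preserve pushouts, not to create them, and $\mathbf{Flex}$ has pushouts because it is a model category.

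\textbf{Closedness.} This paragraph does not work as written.
\begin{itemize}
\item Your criterion ``factors through the lift exactly when its underlying functor is free'' is not well posed, since $U[A,B]_g$ is not a free category. The real condition concerns the transpose $X\otimes A\to B$.
\item More seriously, the hom you borrow is in general not an object of $\mathbf{Flex}$. That hom is the cartesian lift of the pullback of $U[A,B]_g$ and $[UA,UB]_{\mathbf{Free}}$ over $[UA,UB]_{\mathbf{Cat}}$.
\end{itemize}
For a counterexample, take $A=\mathbf{2}$, and let $B$ consist of one $1$-cell $u$ with a non-identity invertible $2$-cell $\gamma:u\Rightarrow u$.
\begin{itemize}
\item The $2$-functor picking $u$ has a pseudonatural automorphism with identity components and structure $2$-cell $\gamma$.
\item Paired with an identity $1$-cell of $[UA,UB]_{\mathbf{Free}}$, this is a non-identity isomorphism in the pullback.
\item A free category contains no such isomorphism.
\end{itemize}

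The fix is to lift instead the free category on the following reflexive graph:
\begin{itemize}
\item vertices are $2$-functors $A\to B$ with free underlying functor;
\item edges are pseudonatural transformations all of whose components are generators or identities.
\end{itemize}
With this hom, free $2$-functors $X\otimes A\to B$ correspond exactly to free $2$-functors from $X$ into it. This follows from the Gray adjunction, the universal property of the cartesian lift, and a check on the generators $l_{x,g}$ and $r_{f,a}$. Alternatively, simply cite closedness from Campbell, as the paper implicitly does.
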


\begin{theorem}[{\cite[Theorem~7.4]{Campbell2026}}]
The adjunction
\[
\begin{tikzpicture}[node distance=4cm]
\node (A) {$(\mathbf{Flex},\otimes)$};
\node (B) [right of=A] {$(2\mathbf{Cat},\otimes)$};

\draw[transform canvas={yshift=0.6ex},->] (A) -- node[above] {$U$} (B);
\draw[transform canvas={yshift=-0.6ex},<-] (A) -- node[below] {$Q$} (B);
\end{tikzpicture}
\]
is a symmetric monoidal Quillen equivalence.
\end{theorem}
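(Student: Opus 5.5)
The plan is to assemble the statement from three ingredients: the Quillen equivalence of Theorem 4.10 of \cite{Campbell2026}, the fact that $\otimes$ restricts to $\mathbf{Flex}$, and the standard criterion for a monoidal Quillen adjunction whose left adjoint is strong monoidal. Since $U\dashv Q$ is already a Quillen equivalence by the earlier cited theorem, only the monoidal part needs proof. I would show two things. First, $U$ is strong symmetric monoidal for the Gray tensor products. Second, the unit and pushout-product compatibility conditions in the definition of a monoidal Quillen adjunction hold.

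\textbf{Step 1: strong monoidality of $U$.} Campbell shows that the Gray tensor product of flexible $2$-categories is again flexible, and the monoidal structure on $\mathbf{Flex}$ is defined by restriction. So the comparison maps $UA\otimes UB\to U(A\otimes B)$ can be taken to be identities, and the unit $D^0$ is flexible with $U(D^0)=D^0$. The associator, unitors and symmetry on $\mathbf{Flex}$ are the restrictions of those on $2\mathbf{Cat}$. The coherence diagrams for a strong symmetric monoidal functor then commute trivially. The one thing to check, and the step I expect to be the main obstacle, is that the restricted structure maps are genuinely morphisms of $\mathbf{Flex}$, i.e.\ flexible (free on the underlying category) $2$-functors. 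For this I would argue on generators, as in the proof for $\diamond$ above. The associator sends generating $1$-cells of $(A\otimes B)\otimes C$ bijectively onto generating $1$-cells of $A\otimes(B\otimes C)$, and the symmetry does the same, so both preserve the free underlying categories. By doctrinal adjunction, $Q$ then inherits a lax symmetric monoidal structure and $U\dashv Q$ becomes a symmetric monoidal adjunction.

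\textbf{Step 2: the model-categorical conditions.} The model structure on $\mathbf{Flex}$ is left induced along $U$, so $U$ preserves and reflects cofibrations and weak equivalences. Because $U$ strictly preserves $\otimes$ and pushouts (it is a left adjoint), it sends the pushout product of two cofibrations in $\mathbf{Flex}$ to the pushout product of their images. That image is a cofibration in $2\mathbf{Cat}$ by Lack's Theorem 7.5, and it is trivial if one factor is trivial. Reflecting back along $U$ gives the pushout-product axiom in $\mathbf{Flex}$, which recovers Theorem 7.3 of \cite{Campbell2026}. The unit condition is immediate, since $D^0$ is cofibrant in both model structures and $U$ preserves it strictly. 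Combining this with the Quillen equivalence of Theorem 4.10 of \cite{Campbell2026} shows that $U\dashv Q$ is a symmetric monoidal Quillen equivalence.
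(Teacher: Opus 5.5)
Your proposal is correct. The paper gives no argument of its own here and simply imports the result from \cite[Theorem~7.4]{Campbell2026}, but your reasoning (the Quillen equivalence of \cite[Theorem~4.10]{Campbell2026} plus strict symmetric monoidality of $U$, with $Q$ made lax monoidal by doctrinal adjunction) is the same reasoning the paper uses for the analogous $\boxtimes$ version of $U\dashv Q$. The only addition is your Step~2, where you re-derive the monoidal model structure on $\mathbf{Flex}$ by reflecting Lack's pushout-product axiom along the left-induced, pushout- and $\otimes$-preserving $U$; the paper instead cites this as \cite[Theorem~7.3]{Campbell2026}.
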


Campbell shows that the flexible tensor product equipped with the left transfer model structure provides the structure of a monoidal model category.

\begin{theorem}[{\cite[Theorem~7.6]{Campbell2026}}]
The category $\mathbf{Flex}$ equipped with the left-induced model
structure and the Gray tensor product is a monoidal model category.
\end{theorem}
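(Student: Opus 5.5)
Since Campbell's model structure on $\mathbf{Flex}$ is left induced from Lack's model structure along $U:\mathbf{Flex}\to 2\mathbf{Cat}$, the plan is to transport the monoidal model axioms from $(2\mathbf{Cat},\otimes)$, where they hold by \cite[Theorem~7.5]{Lack2002}. Left induction means that a morphism $h$ of $\mathbf{Flex}$ is a cofibration, respectively a weak equivalence, exactly when $U(h)$ is a cofibration, respectively a biequivalence, in Lack's model structure. So every axiom that can be phrased in terms of cofibrations and weak equivalences can be checked after applying $U$.

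First I would settle the monoidal closed structure itself. Campbell showed that the Gray tensor product restricts to $\mathbf{Flex}$, so $U(A\otimes B)=UA\otimes UB$ and $U$ is strict monoidal. The unit $D^0$ is flexible, since its underlying category is free. Closedness comes from the internal hom $[A,B]_{\mathbf{Flex}}$ recalled above, defined as the $U$-cartesian lift through the pullback square. Its adjunction with $\otimes$ follows from the one for $(\otimes,[-,-]_g)$ on $2\mathbf{Cat}$ together with the universal property of cartesian lifts: a map $A\otimes B\to C$ in $\mathbf{Flex}$ corresponds to a map $A\to[B,C]_g$ whose underlying functor factors through $[UB,UC]_{\mathbf{Free}}$.

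Next comes the pushout-product axiom. Let $f:A\to A'$ and $g:B\to B'$ be cofibrations in $\mathbf{Flex}$, and form
\[
f\,\square\, g:(A'\otimes B)\cup_{A\otimes B}(A\otimes B')\longrightarrow A'\otimes B'.
\]
Because $U$ is a left adjoint (to $Q$), it preserves pushouts, and because it is strict monoidal it carries $f\,\square\,g$ to $U(f)\,\square\,U(g)$ formed in $2\mathbf{Cat}$. Lack's theorem shows that $U(f)\,\square\,U(g)$ is a cofibration, and a trivial one whenever $U(f)$ or $U(g)$ is a biequivalence. By left induction, $f\,\square\,g$ is then a cofibration in $\mathbf{Flex}$, and a trivial cofibration when $f$ or $g$ is trivial.

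The unit axiom is automatic, because every object of $\mathbf{Flex}$ is cofibrant and in particular $D^0$ is; no cofibrant replacement of the unit is needed. Symmetry comes from the symmetry of $\otimes$ on $2\mathbf{Cat}$ restricted to $\mathbf{Flex}$.

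I expect the main obstacle to be the identification $U(f\,\square\,g)\cong U(f)\,\square\,U(g)$. It needs the tensor product computed in $\mathbf{Flex}$ to agree on the nose with the one in $2\mathbf{Cat}$, and pushouts in $\mathbf{Flex}$ to be created by $U$, which has to be checked against Campbell's explicit description of flexible morphisms. I would first verify this on Lack's generating cofibrations in $I$, which suffices because both sides preserve colimits in each variable. The same argument should go through verbatim for the flexible tensor product $\boxtimes$, which by Theorem \ref{restrict_of_flex_prod_is_cart_prod} restricts to $\mathbf{Flex}$.
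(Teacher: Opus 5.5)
Your argument is correct, given the facts about $\mathbf{Flex}$ that the paper records. The paper gives no proof of this statement; it simply imports it from Campbell. So the only available comparison is with how the paper treats the analogous result on $2\mathbf{Cat}$: for Theorem~\ref{flex_provides_mon_mod_structr} it re-runs Lack's computations on generating cofibrations.

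You instead use the formal transfer principle for left-induced structures. Because $U$ creates cofibrations and weak equivalences, preserves pushouts (being left adjoint to $Q$), and is strict monoidal, you get $U(f\,\square\,g)\cong U(f)\,\square\,U(g)$. The pushout-product axiom in $\mathbf{Flex}$, including the trivial case, is then just the image of \cite[Theorem~7.5]{Lack2002}. The unit axiom is automatic because every object of $\mathbf{Flex}$, in particular $D^0$, is cofibrant. This route needs no computation. As you note, it also yields the $\boxtimes$ version once Theorems~\ref{flex_provides_mon_mod_structr} and \ref{restrict_of_flex_prod_is_cart_prod} are available; that version seems to be what the sentence preceding the statement intends.

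Two corrections to your last paragraph. First, the identification $U(f\,\square\,g)\cong U(f)\,\square\,U(g)$ is not an obstacle. It follows formally from the two facts you already used, and it only requires $U$ to preserve pushouts, not create them, so nothing has to be checked against Campbell's description of flexible morphisms. Second, the proposed reduction to Lack's set $I$ is beside the point. Here $f$ and $g$ are arbitrary cofibrations of $\mathbf{Flex}$, and a check on $I$ would only help if you first knew that $I$ generates those cofibrations inside $\mathbf{Flex}$, which you neither show nor need.

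Finally, your paragraph on closedness describes Campbell's internal hom heuristically rather than proving the adjunction. It is best simply cited.
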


\begin{theorem}[{\cite[Theorem~7.13]{Campbell2026}}]
The identity adjunction
\[
\begin{tikzpicture}[node distance=4cm]
\node (A) {$(\mathbf{Flex},\otimes)$};
\node (B) [right of=A] {$(\mathbf{Flex},\boxtimes)$};

\draw[transform canvas={yshift=0.6ex},->] (A) -- node[above] {$1$} (B);
\draw[transform canvas={yshift=-0.6ex},<-] (A) -- node[below] {$1$} (B);
\end{tikzpicture}
\]
is a weak symmetric monoidal Quillen equivalence.
\end{theorem}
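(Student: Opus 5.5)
The statement is a cited result (Campbell's Theorem 7.13): the identity adjunction between $(\mathbf{Flex},\otimes)$ and $(\mathbf{Flex},\boxtimes)$ is a weak symmetric monoidal Quillen equivalence. I would establish it in two layers: first that the identity is a Quillen equivalence of the underlying model categories, then that the monoidal comparison maps meet the weak monoidal Quillen conditions.

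The underlying Quillen equivalence is immediate. Both sides carry the same left-induced model structure from Theorem 4.9 of \cite{Campbell2026}, so the identity functor preserves cofibrations and weak equivalences in both directions. It is therefore a Quillen equivalence.

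For the monoidal structure I would make the left adjoint $1:(\mathbf{Flex},\otimes)\to(\mathbf{Flex},\boxtimes)$ \emph{oplax} monoidal, with comparison map
\[
j:A\otimes B\longrightarrow A\boxtimes B
\]
from the factorization square preceding Lemma \ref{biequival_between_products}, together with the identity on the unit $D^0$.
\begin{itemize}
\item \emph{Naturality of $j$.} This follows from functoriality of the $(\mathrm{bo},\mathrm{lff})$ factorization applied to the natural square relating $i$, $P$, $R$ and $\mathbf{st}$.
\item \emph{Coherence.} Compatibility with the associators and the symmetry comes from the uniqueness of diagonal fillers in the orthogonal factorization system. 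The two composites $(A\otimes B)\otimes C\to A\boxtimes(B\boxtimes C)$ agree after composing with the lff map to $A\times B\times C$. They also agree after precomposing with the bijective-on-objects-and-arrows map from the iterated funny tensor product. So they coincide.
\end{itemize}

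Following Schwede--Shipley, the weak monoidal Quillen conditions then reduce to two checks.
\begin{itemize}
\item For cofibrant $A$ and $B$ (here every flexible $2$-category), $j:A\otimes B\to A\boxtimes B$ must be a weak equivalence. This is exactly Lemma \ref{biequival_between_products}, since $j$ is a biequivalence.
\item The unit condition asks that a cofibrant replacement of the unit map to $D^0$ be compatible with the comparison. This is trivial because $D^0$ is cofibrant in $\mathbf{Flex}$ and the unit comparison is the identity.
\end{itemize}
Symmetry holds because $j$ commutes with the symmetry isomorphisms, again by uniqueness of fillers.

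I expect the main obstacle to be that, on the $\mathbf{Flex}$ side, $\boxtimes$ must be interpreted as Campbell's cartesian product (Theorem \ref{restrict_of_flex_prod_is_cart_prod}). One also needs the coherence of $j$ against Campbell's associator, which is defined through $U$-cartesian lifts rather than through the factorization. I would handle this by using that $U$ is faithful on the relevant lifts, so that coherence can be checked after applying $U$. There it reduces to the coherence of $\diamond$ on $\mathbf{Free}$ with the cartesian product.
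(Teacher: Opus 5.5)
The paper gives no proof of this statement; it is quoted from Campbell's Theorem~7.13, so there is no argument of the paper's to compare routes with. Your skeleton is the right one: the same model structure on both sides makes the identity a Quillen equivalence, the left adjoint is made oplax via a comparison $A\otimes B\to A\boxtimes B$, that comparison is shown to be a biequivalence, and the unit is trivial.

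The genuine gap is in where the comparison and its coherence come from. You take $\boxtimes$ on $\mathbf{Flex}$ to be the paper's $2\mathbf{Cat}$-level flexible tensor product, deduce coherence from its associator by uniqueness of fillers, and in your last paragraph reduce to the claim that $\diamond$ is the cartesian product on $\mathbf{Free}$. None of these inputs holds for the construction as defined: not the associator of Theorem~\ref{2_cat_with_flex_prod_is_sym_mon_closed}, not Theorem~\ref{restrict_of_flex_prod_is_cart_prod}, and not the $\mathbf{Free}$ claim.

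The reason is that $\diamond$ adjoins a free diagonal $\delta_{f,g}$ for \emph{every} pair of non-identity $1$-cells, composites included. Since $R$ is bijective on objects and $1$-cells, $U(A\boxtimes B)=UA\diamond UB$. Take $A$ free on $0\xrightarrow{f_1}1\xrightarrow{f_2}2$ and $B=C=D^1$, as locally discrete $2$-categories.
\begin{itemize}
\item $A\diamond B$ is free on $10$ edges, since it contains $\delta_{f_2f_1,g}$. The product of the reflexive generating graphs has only $9$. So $A\boxtimes B$ is not Campbell's product.
\item Worse, $U((A\boxtimes B)\boxtimes C)$ and $U(A\boxtimes(B\boxtimes C))$ are free on graphs with $47$ and $44$ edges respectively. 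The generators of a free category are exactly its indecomposable arrows, so these are not isomorphic, and $\boxtimes$ is not associative even up to isomorphism.
\end{itemize}
So there is no associator for your filler argument to respect, and your final reduction lands on a false statement.

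The repair is to bypass the paper's $\boxtimes$ and work with Campbell's product $A\times_{\mathbf{Flex}}B$. Its underlying category is free on the product of the reflexive generating graphs, and its $2$-cells come from the $U$-cartesian lift over $A\times B$.
\begin{itemize}
\item \emph{The comparison.} $D^0$ is terminal in $\mathbf{Flex}$ and is the unit of $\otimes$, so $(\mathbf{Flex},\otimes)$ is semicartesian. Define $c\colon A\otimes B\to A\times_{\mathbf{Flex}}B$ by the two maps $A\otimes B\to A\otimes D^0\cong A$ and $A\otimes B\to D^0\otimes B\cong B$.
\item \emph{Coherence.} Compatibility with associators, unitors and symmetries is then automatic. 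Both legs of each coherence diagram map into an iterated product and have the same projections.
\item \emph{Weak equivalence.} The comparison $\pi\colon A\times_{\mathbf{Flex}}B\to A\times B$ is bijective on objects, surjective on $1$-cells and locally fully faithful, hence a biequivalence. Checking projections gives $\pi\circ c=Q$, and $Q$ is a biequivalence. By two-out-of-three, $c$ is a biequivalence.
\end{itemize}
This replaces your appeal to Lemma~\ref{biequival_between_products}, which is about a different object. Your unit argument goes through unchanged.
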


The preceding results establish the compatibility of the Gray and
flexible tensor products on $\mathbf{Flex}$. We now transfer this
structure back to $2\mathbf{Cat}$ and compare the resulting monoidal
model structures. The key observation is that the flexible tensor product
is related to the Gray tensor product by biequivalences.

\begin{lemma}
If $i$ and $j$ are cofibrations, then
\[
i\boxtimes j
\]
is a cofibration. In particular, if $A$ is cofibrant and $j$ is a
cofibration, then
\[
A\boxtimes j
\]
is a cofibration.
\end{lemma}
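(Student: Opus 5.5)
We read ``$i\boxtimes j$ is a cofibration'' as the standard pushout-product statement: for cofibrations $i:A\to A'$ and $j:B\to B'$ in Lack's model structure, the induced map
\[
i\,\hat{\boxtimes}\,j:(A'\boxtimes B)\cup_{A\boxtimes B}(A\boxtimes B')\longrightarrow A'\boxtimes B'
\]
is a cofibration. The second assertion is then the case $i:\emptyset\to A$, since $\emptyset\boxtimes B=\emptyset$. Note that $\emptyset\boxtimes B$ is indeed empty, because $\boxtimes$ has underlying objects $\operatorname{ob}(A)\times\operatorname{ob}(B)$.

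The plan is to reduce to generating cofibrations and then compare with the Gray case, which is known by \cite[Theorem~7.5]{Lack2002}. By Theorem~\ref{2_cat_with_flex_prod_is_sym_mon_closed}, $\boxtimes$ is closed in each variable, so $-\boxtimes B$ and $A\boxtimes -$ preserve colimits. The standard argument (Hovey, Lemma~4.2.4) then shows it suffices to check $i\,\hat{\boxtimes}\,j$ for $i,j\in I=\{i_1,i_2,i_3,i_4\}$: the class of maps whose pushout product with a fixed cofibration is a cofibration is closed under pushouts, transfinite composites and retracts. By symmetry of $\boxtimes$, this leaves ten pairs.

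For these pairs I would use the characterization of Lack cofibrations. A $2$-functor is a cofibration iff it is injective on objects and its image is ``free on 1-cells'' relative to the domain: every $1$-cell of the codomain is uniquely a composite of generating 1-cells, and these generators include all 1-cells outside the image of the domain. 2-cells impose no condition, since $i_3,i_4$ generate arbitrary local behaviour on 2-cells.

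The pairs split as follows.
\begin{itemize}
\item If either map is $i_1$, the pushout product is $A\boxtimes j$ or $i\boxtimes B$ with $A$ or $B$ equal to $D^0$. This is just $j$ or $i$ up to the unitors, which transfer from $\diamond$ through the bo-factorization.
\item If either map is $i_3$ or $i_4$, it is the identity on underlying categories. The pushout product is then bijective on objects and on 1-cells, so it is a cofibration, as it is locally a pushout of $i_3,i_4$.
\end{itemize}

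The only genuinely new case is $i_2\,\hat{\boxtimes}\,i_2$. Its target $\mathbf 2\boxtimes\mathbf 2$ has four objects and underlying category free on the four edges together with the diagonal $\delta$. Concretely, $U(\mathbf2\diamond\mathbf2)$ is free because $\mathbf 2$ is flexible, and $R$ is bijective on $1$-cells. It also carries invertible 2-cells from $\delta$ to each of the two composites. The domain is the boundary: four objects and the four edges. So the map is obtained by freely attaching one $1$-cell (a pushout of $i_2$) and then two invertible $2$-cells. The pushout $\mathbf 2\to\mathbf D$ of the two parallel arrows is a cofibration, obtainable from $i_3$ and $i_4$ together with inverses, exactly as in Lack's treatment of the Gray case.

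The main obstacle is making rigorous that the underlying category of $A'\boxtimes B'$ is free relative to the pushout. The cleanest route is to use Lemma~\ref{uni_prop_of_flex_prod}, which says $A\boxtimes B$ is $A\otimes B$ with diagonals $\delta_{f,g}$ and invertible $\xi_{f,g}$ freely attached. This exhibits $A\boxtimes B$ as a pushout of $A\otimes B$ along copies of $\mathbf 2\to\mathbf D$-type cofibrations. Hence $i\,\hat{\boxtimes}\,j$ is the Gray pushout product $i\,\hat{\otimes}\,j$, which is a cofibration by \cite{Lack2002}, followed by pushouts of cofibrations attaching the diagonals of 1-cells in $A'\times B'$ not coming from $A\times B'$ or $A'\times B$. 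I would try this reduction first, as it avoids case analysis.
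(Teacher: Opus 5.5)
The reduction to generating cofibrations in your second paragraph, which is also the route the paper takes, does not work. Your final paragraph, however, already contains a correct proof that avoids it.

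\textbf{The gap.} Your pushout-product reading is the right one; read literally, the statement fails, since $1_A\boxtimes i_1$ is $\emptyset\to A$. The trouble is the step ``$\boxtimes$ is closed in each variable, so it preserves colimits''. The functor $-\boxtimes B$ does not preserve pushouts, so Theorem~\ref{2_cat_with_flex_prod_is_sym_mon_closed} cannot hold as stated.

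Take $\mathbf 3=\mathbf 2\cup_{D^0}\mathbf 2$, the free category on $0\xrightarrow{f_1}1\xrightarrow{f_2}2$, and let $g$ be the arrow of $\mathbf 2$. The underlying-category functor $U$ preserves colimits, and $U(X\boxtimes Y)=UX\diamond UY$, with diagonals adjoined for \emph{all} pairs of non-identity 1-cells, composites included. Hence $U(\mathbf 3\boxtimes\mathbf 2)$ is free on a graph with $\delta_{f_2f_1,g}$ as an edge. By contrast, $U\bigl((\mathbf 2\boxtimes\mathbf 2)\cup_{D^0\boxtimes\mathbf 2}(\mathbf 2\boxtimes\mathbf 2)\bigr)$ is free on the same graph without that edge. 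So the comparison map is not surjective on 1-cells.

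Consequently the standard proof that $\{i : i\hat\boxtimes j \text{ is a cofibration}\}$ is closed under pushouts breaks down, and Hovey's Lemma~4.2.4 is unavailable. The paper's proof, which transfers Lack's generator computations, depends on exactly this reduction. Your case analysis therefore inherits the defect rather than repairing it.

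Two smaller points: your description of cofibrations is garbled, and $j_2$ cannot be built from $i_3,i_4$, since it adds a 1-cell. It is a cofibration simply because $j_2\in J$.

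\textbf{The correct route.} Make your last paragraph the whole proof, applied directly to arbitrary cofibrations $i:A\to A'$ and $j:B\to B'$. Two things need supplying.

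First, Lemma~\ref{uni_prop_of_flex_prod} as stated only gives uniqueness, so prove the presentation directly. Let $N(X,Y)$ be the set of pairs of non-identity 1-cells. Let $P$ be the pushout of $X\otimes Y$ along $N(X,Y)$ copies of $j_2:\mathbf 2\to\mathbf D$, where the $(f,g)$ copy picks $r_{f,b'}\circ l_{a,g}$.
\begin{itemize}
\item $X\otimes Y\to P$ is a trivial cofibration.
\item $Q$ is bijective on objects, surjective on 1-cells and lff, hence a biequivalence.
\item By two-out-of-three, $P\to X\times Y$ is a biequivalence, in particular lff.
\item $X\diamond Y\to P$ is bijective on objects and 1-cells.
\end{itemize}
Uniqueness of (bo,lff)-factorizations then gives $P\cong X\boxtimes Y$, naturally in 2-functors that send non-identity 1-cells to non-identity 1-cells.

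Second, cofibrations are injective on objects and faithful. So $i$ and $j$ are injective on 1-cells and preserve non-identity 1-cells, and inside $N(A',B')$ one has $N(A',B)\cap N(A,B')=N(A,B)$.

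Commuting colimits then exhibits $i\hat\boxtimes j$ as two steps:
\begin{itemize}
\item a pushout of $i\hat\otimes j$, which is a cofibration by \cite{Lack2002};
\item followed by a pushout of copies of $j_2$ indexed by $N(A',B')\setminus\bigl(N(A',B)\cup N(A,B')\bigr)$.
\end{itemize}
Taking $i:\emptyset\to A$ gives the second assertion.

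Compared with the paper, this route uses the Gray pushout-product axiom as a black box and never needs closedness of $\boxtimes$, which is why it is sound.
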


\begin{proof}
The proof is identical to that of \cite[Lemma~7.3]{Lack2002}. 
\end{proof}

Lack's calculations involving the generating cofibrations in the proof of \cite[Lemma~7.3]{Lack2002} are reproduced here verbatim.

\begin{lemma}\label{biequival_of_products}
If $F:A\to B$ is a biequivalence and $C$ is a $2$-category, then
\[
C\boxtimes F:C\boxtimes A\longrightarrow C\boxtimes B
\]
is a biequivalence.
\end{lemma}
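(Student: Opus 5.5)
The plan is a two-out-of-three argument. We compare $C\boxtimes F$ with the Cartesian product $C\times F$, using the comparison maps of Lemma \ref{biequival_between_products}. The map $S:A\boxtimes B\to A\times B$ is the lff half of the $(\mathrm{bo},\mathrm{lff})$ factorization of $\mathbf{st}:A\diamond B\to A\times B$. Factorizations in an orthogonal factorization system are functorial, and $\mathbf{st}$ is natural in both variables. It follows that $S$ is natural too, so the square
\[
\begin{tikzcd}
C\boxtimes A \arrow[r,"C\boxtimes F"] \arrow[d,"S_{C,A}"'] & C\boxtimes B \arrow[d,"S_{C,B}"]\\
C\times A \arrow[r,"C\times F"'] & C\times B
\end{tikzcd}
\]
commutes. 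The first step is to record this naturality: define $C\boxtimes F$ through the unique diagonal filler of the factorization applied to $C\diamond F$, and check that it makes the square commute.

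The second step is to show that $C\times F$ is a biequivalence whenever $F$ is. Here I would use the local characterization: a biequivalence is a $2$-functor that is surjective on objects up to equivalence and a local equivalence on hom-categories. On hom-categories,
\[
(C\times F)_{(c,a),(c',a')}=1\times F_{a,a'}:C(c,c')\times A(a,a')\to C(c,c')\times B(F a,F a'),
\]
which is a product of equivalences of categories and hence an equivalence. For essential surjectivity, given an object $(c,b)$ choose an equivalence $F a\simeq b$ and pair it with $1_c$. Pairs of equivalences are equivalences in $C\times B$.

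The third step assembles the pieces. By Lemma \ref{biequival_between_products}, both vertical maps $S_{C,A}$ and $S_{C,B}$ are biequivalences. Biequivalences satisfy two-out-of-three, for instance because they are the weak equivalences of Lack's model structure. So from the commuting square, $S_{C,B}\circ(C\boxtimes F)=(C\times F)\circ S_{C,A}$ is a biequivalence, and therefore $C\boxtimes F$ is one as well.

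I expect the main obstacle to be the first step. Lemma \ref{biequival_between_products} is stated objectwise, so naturality of $S$ (equivalently, functoriality of $\boxtimes$ in the second variable) has to be extracted from the orthogonality $\mathcal E\perp\mathcal M$. Concretely, $R_{C,B}\circ(C\diamond F)$ lies over $(C\times F)\circ S_{C,A}$, and the unique filler against $R_{C,A}\in\mathcal E$ and $S_{C,B}\in\mathcal M$ is by definition $C\boxtimes F$. This gives the square directly. I would spell this out carefully, since it is also what makes $\boxtimes$ a functor in the first place.
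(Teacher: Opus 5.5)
Your proof is correct, but it takes a different route from the paper. The paper compares $C\boxtimes F$ with the Gray tensor product rather than the Cartesian product. It uses three inputs:
\begin{itemize}
\item the comparison maps $j:C\otimes A\to C\boxtimes A$ and $j:C\otimes B\to C\boxtimes B$ are biequivalences (Lemma \ref{biequival_between_products});
\item $C\otimes F$ is a biequivalence, by \cite[Lemma~7.4]{Lack2002};
\item two-out-of-three applied to $j\circ(C\otimes F)=(C\boxtimes F)\circ j$.
\end{itemize}

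Your version differs in two ways. First, you replace Lack's lemma with the elementary check that $C\times F$ is a biequivalence. Second, you need only the $S$-half of Lemma \ref{biequival_between_products}, which can be read off directly from the factorization. Indeed, $S$ is bijective on objects, surjective on $1$-cells (since $\mathbf{st}(\delta_{f,g})=(f,g)$), and locally fully faithful. So each local functor of $S$ is an equivalence.

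This buys you two things. Your proof needs no coherence input about the Gray tensor product. It also does not depend on the paper's argument for $j$, which asserts that $j$ is surjective on $1$-cells. That claim is false: the diagonals $\delta_{f,g}$ with $f,g$ non-identities are not in the image of $j$. The conclusion is still true, because $j$ is locally essentially surjective.

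The paper's route, in exchange, stays parallel to Lack's treatment of $\otimes$. It also relates $\boxtimes$ directly to $\otimes$, which is the comparison used later for the Quillen equivalences.

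Both proofs need a naturality square for the comparison map. The paper asserts its square without comment. Your derivation from uniqueness of the diagonal filler of $R_{C,A}\in\mathcal{E}$ against $S_{C,B}\in\mathcal{M}$ supplies the missing justification. The same filler argument, applied to $P$ and $S$, justifies the paper's square for $j$.
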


\begin{proof}
By Lemma~\ref{biequival_of_products}, the comparison maps
\[
j:C\otimes A\longrightarrow C\boxtimes A
\]
and
\[
j:C\otimes B\longrightarrow C\boxtimes B
\]
are biequivalences. Moreover, $C\otimes F$ is a biequivalence by
\cite[Lemma~7.4]{Lack2002}. Since
\[
j\circ(C\otimes F)
=
(C\boxtimes F)\circ j,
\]
the two-out-of-three property implies that $C\boxtimes F$ is a
biequivalence.
\end{proof}

\begin{theorem}\label{flex_provides_mon_mod_structr}
The Lack model structure on $2\mathbf{Cat}$ together with the flexible
tensor product is a monoidal model category.
\end{theorem}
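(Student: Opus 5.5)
We verify the two axioms of a monoidal model category for $(2\mathbf{Cat},\boxtimes,D^0)$ with Lack's model structure: the pushout-product axiom and the unit axiom. Throughout we take as given that $\boxtimes$ is a closed symmetric monoidal structure (Theorem~\ref{2_cat_with_flex_prod_is_sym_mon_closed}), so $-\boxtimes C$ preserves colimits. The pushout-product $i\,\hat\boxtimes\, j$ is therefore well defined.

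For the pushout-product axiom it suffices, by the standard reduction for cofibrantly generated model structures, to check generators. We must show that $i\,\hat\boxtimes\, i'$ is a cofibration for $i,i'\in I$, and that $i\,\hat\boxtimes\, j$ is a trivial cofibration for $i\in I$, $j\in J$. The first part is the content of the preceding lemma on cofibrations, obtained by redoing Lack's case analysis for the generators $i_1,\dots,i_4$. The extra diagonal $1$-cells $\delta_{f,g}$ and the invertible $2$-cells $\xi_{f,g}$ are freely adjoined, so they only add generating data of the kind already produced by $i_2$ and $i_3$; they do not change the shape of Lack's argument.

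It remains to show that $i\,\hat\boxtimes\, j$ is a weak equivalence when $j\in J$. Write $j:X\to Y$ and $i:K\to L$. The map $i\,\hat\boxtimes\, j$ sits in the pushout square built from $K\boxtimes j$ and $L\boxtimes j$. By Lemma~\ref{biequival_of_products}, both $K\boxtimes j$ and $L\boxtimes j$ are biequivalences, since $j$ is a biequivalence. The map $K\boxtimes j$ is moreover a cofibration by the previous lemma, because every generator in $J$ is a cofibration. Lack's model structure is proper, so the pushout of the biequivalence $K\boxtimes j$ along the cofibration $K\boxtimes X\to L\boxtimes X$ is again a biequivalence; here we use left properness (all objects are not cofibrant in general, but left properness handles pushouts of weak equivalences along cofibrations). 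Two-out-of-three applied to the composite $L\boxtimes X\to P\to L\boxtimes Y$ then shows that $i\,\hat\boxtimes\, j$ is a biequivalence. The same argument works with $i$ replaced by an arbitrary cofibration.

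For the unit axiom, let $q:QD^0\to D^0$ be a cofibrant replacement. We need $q\boxtimes A$ to be a biequivalence for cofibrant $A$, and this is immediate from Lemma~\ref{biequival_of_products} together with the symmetry of $\boxtimes$. (In fact $D^0$ is already cofibrant, so the axiom is automatic.) The main obstacle is the cofibration half of the pushout-product axiom. There one must check by hand that adjoining the $\delta$ cells does not break Lack's verification for the pairs involving $i_3$ and $i_4$. The delicate case is $i_4\,\hat\boxtimes\, i_2$, where one must confirm that the $2$-cells $\delta_{\alpha,\beta}$ are identified compatibly with the quotient. I would handle it by computing both sides explicitly on the small generating $2$-categories, using Lemma~\ref{uni_prop_of_flex_prod} to describe maps out of $\boxtimes$.
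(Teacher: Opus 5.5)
Your proposal is correct and takes the same route as the paper. The paper's proof just says to rerun Lack's argument for \cite[Theorem~7.5]{Lack2002} using the cofibration lemma for $\boxtimes$ and Lemma~\ref{biequival_of_products}, which is exactly what you wrote out: reduction to generators, the pushout-product lemma for cofibrations, the biequivalence lemma combined with a pushout and two-out-of-three, and the unit axiom holding automatically since $D^0$ is cofibrant.
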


\begin{proof}
The proof is identical to that of
\cite[Theorem~7.5]{Lack2002}, using the preceding lemmas.
\end{proof}

\begin{theorem}\label{id_ad_is_weak_Quill_equiv}
The identity adjunction
\[
\begin{tikzpicture}[node distance=4cm]
\node (A) {$(2\mathbf{Cat},\otimes)$};
\node (B) [right of=A] {$(2\mathbf{Cat},\boxtimes)$};

\draw[transform canvas={yshift=0.6ex},->] (A) -- node[above] {$1$} (B);
\draw[transform canvas={yshift=-0.6ex},<-] (A) -- node[below] {$1$} (B);
\end{tikzpicture}
\]
is a weak symmetric monoidal Quillen equivalence.
\end{theorem}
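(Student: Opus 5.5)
The identity adjunction is a Quillen equivalence for trivial reasons, since both sides carry the same Lack model structure. The content of the statement is therefore the monoidal part. I would take ``weak symmetric monoidal Quillen equivalence'' in the sense of Schwede--Shipley, as Campbell does for the analogous \cite[Theorem~7.13]{Campbell2026}. This requires three things: a lax symmetric monoidal structure on the right adjoint, equivalently an oplax symmetric monoidal structure on the left adjoint; that the oplax comparison map on cofibrant objects is a weak equivalence; and a unit condition.

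The plan proceeds in four steps.

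\emph{Step 1 (Quillen equivalence).} Both sides are $2\mathbf{Cat}$ with Lack's model structure, so the identity preserves cofibrations and trivial cofibrations. The derived unit and counit are identities, so the adjunction is a Quillen equivalence.

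\emph{Step 2 (monoidal comparison).} I would take the structure map of the right adjoint $1:(2\mathbf{Cat},\boxtimes)\to(2\mathbf{Cat},\otimes)$ to be the comparison
\[
j_{A,B}:A\otimes B\longrightarrow A\boxtimes B
\]
constructed above, together with the identity on the common unit $D^0$. Naturality of $j$ in $A$ and $B$ follows from the uniqueness of the diagonal filler in the $(\mathrm{bo},\mathrm{lff})$ factorization system, because $i$, $P$, $R$, $Q$ and $S$ are all natural.

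The coherence axioms (associativity, unit, symmetry) for $j$ are the point I expect to be the main obstacle. The associators, unitors and symmetries of $\otimes$ and $\boxtimes$ are themselves induced, via \cite[Theorem~1.6]{BourkeGurski}, from those of $\star$, $\diamond$ and $\times$. My approach would be to reduce every coherence diagram to a diagram of boba maps out of $(A\star B)\star C$ followed by lff maps into $(A\times B)\times C$. By orthogonality, two maps out of a bo map that agree after composing with an lff map must be equal. It is therefore enough to check the diagrams at the funny level, where they hold because $i:A\star B\to A\diamond B$ is strictly compatible with the generator-wise associator, unitors and symmetry. The subtle part is confirming that the iterated factorizations really are bo/lff, using that the factorization system is reflective as shown by Bourke--Gurski.

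\emph{Step 3 (weak equivalence condition).} By Lemma~\ref{biequival_between_products}, $j_{A,B}$ is a biequivalence for all $A$ and $B$, and in particular for cofibrant ones. This is exactly the condition that the comparison $1(A)\otimes 1(B)\to 1(A\boxtimes B)$ be a weak equivalence.

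\emph{Step 4 (unit).} Both units equal $D^0$, which is cofibrant, and the unit comparison map is the identity. The unit condition is therefore immediate.

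Combined with Theorem~\ref{flex_provides_mon_mod_structr} and the Gray case \cite[Theorem~7.5]{Lack2002}, which show that both sides are symmetric monoidal model categories, these four steps give the statement.
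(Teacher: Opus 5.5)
Your proposal is correct and takes the same approach as the paper, whose proof just notes that the identity adjunction on Lack's model structure is a Quillen equivalence and that the comparison $j\colon A\otimes B\to A\boxtimes B$ is a biequivalence by Lemma~\ref{biequival_between_products}. Your Steps~2 and~4 add details the paper leaves implicit: the naturality and coherence of $j$ via uniqueness of $(\mathrm{bo},\mathrm{lff})$ diagonal fillers, and the trivial unit condition. Note that filler uniqueness requires the two maps to agree both after precomposing with the bo map and after postcomposing with the lff map, not only one of these.
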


\begin{proof}\label{adj_is_Quill_equiv}
This follows from Lemma~\ref{biequival_between_products} and the existence of the model structure due to Lack \cite{Lack2002}.
\end{proof}

\begin{theorem}
The adjunction
\[
\begin{tikzpicture}[node distance=4cm]
\node (A) {$(\mathbf{Flex},\boxtimes)$};
\node (B) [right of=A] {$(2\mathbf{Cat},\boxtimes)$};

\draw[transform canvas={yshift=0.6ex},->] (A) -- node[above] {$U$} (B);
\draw[transform canvas={yshift=-0.6ex},<-] (A) -- node[below] {$Q$} (B);
\end{tikzpicture}
\]
is a symmetric monoidal Quillen equivalence.
\end{theorem}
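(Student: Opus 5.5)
The statement has three components: (i) $U\dashv Q$ is a Quillen equivalence for the underlying model structures; (ii) $U$ is strong symmetric monoidal from $(\mathbf{Flex},\boxtimes)$ to $(2\mathbf{Cat},\boxtimes)$; (iii) the unit condition of a monoidal Quillen adjunction holds. For (i) I would not reprove anything: the adjunction $U\dashv Q$ here is the same adjunction as in \cite[Theorem~4.10]{Campbell2026}, and the model structures on $\mathbf{Flex}$ and on $2\mathbf{Cat}$ do not depend on the chosen tensor product. So the Quillen equivalence is inherited verbatim, and the whole content lies in the monoidal compatibility.

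For (ii), take flexible $2$-categories $A$ and $B$ and compare $U(A\boxtimes_{\mathbf{Flex}}B)$ with $UA\boxtimes UB$ in $2\mathbf{Cat}$.
\begin{enumerate}
\item By Theorem~\ref{restrict_of_flex_prod_is_cart_prod}, $UA\boxtimes UB$ is already flexible.
\item The same theorem says it is the cartesian product in $\mathbf{Flex}$, i.e.\ Campbell's product. Hence the canonical comparison map $U(A\boxtimes_{\mathbf{Flex}}B)\to UA\boxtimes UB$ is the identity, or at least an isomorphism.
\item The unit $D^0$ is flexible and $U(D^0)=D^0$, which handles the unit constraint.
\item The associator, unitors and symmetry of $\boxtimes$ on $2\mathbf{Cat}$ are induced from those of $\diamond$ through the $(\mathrm{bo},\mathrm{lff})$ factorization. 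By uniqueness of factorizations, they restrict to the cartesian coherence isomorphisms on $\mathbf{Flex}$.
\end{enumerate}
Together these make $U$ strong symmetric monoidal. The right adjoint $Q$ then acquires the lax monoidal structure given by doctrinal adjunction.

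For (iii), the unit condition is immediate. The unit $D^0$ is cofibrant in $\mathbf{Flex}$ (every object is) and in Lack's model structure ($i_1$ is a generating cofibration). Moreover, a strong monoidal left Quillen functor between monoidal model categories with cofibrant units is automatically a monoidal Quillen adjunction.

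For the "equivalence" part, the remaining point is to check that the derived functors are compatible with the monoidal structures. For cofibrant $X$ in $2\mathbf{Cat}$, I would compare $QX\boxtimes QY$ with $Q(X\boxtimes Y)$ using three facts:
\begin{itemize}
\item the counit $UQX\to X$ is a biequivalence, since $U\dashv Q$ is a Quillen equivalence;
\item Lemma~\ref{biequival_of_products} shows $\boxtimes$ preserves biequivalences in each variable;
\item two-out-of-three for biequivalences.
\end{itemize}

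The main obstacle is step (ii): we must know that the coherence isomorphisms of $\boxtimes$ on $2\mathbf{Cat}$ genuinely agree with the cartesian ones on $\mathbf{Flex}$, and not merely that the objects agree. I would attack this through the universal property of Lemma~\ref{uni_prop_of_flex_prod}. A $2$-functor out of $A\boxtimes B$ is determined by its restriction along $j$ together with the invertible $2$-cells $\xi_{f,g}$, so it suffices to check agreement on the Gray part (known from \cite[Theorem~7.4]{Campbell2026}) and on the diagonals $\delta_{f,g}$. On the diagonals, the associator is defined generator-by-generator exactly as in the $1$-dimensional graph product, and those formulas match the cartesian reassociation of $(f,g,h)$.
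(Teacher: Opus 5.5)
Your proposal is correct and follows the same route as the paper: the Quillen equivalence is the one already available for the underlying model structures, and the monoidal content is that $U$ is strict (hence strong) symmetric monoidal, which the paper asserts outright and you justify via Theorem~\ref{restrict_of_flex_prod_is_cart_prod}. Your appeal to \cite[Theorem~4.10]{Campbell2026} for the Quillen equivalence is more to the point than the paper's citation of Lemma~\ref{biequival_between_products}, and your extra derived check on $QX\boxtimes QY\to Q(X\boxtimes Y)$ is harmless but unnecessary once $U$ is strong monoidal and the units are cofibrant.
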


\begin{proof}
By Lemma~\ref{biequival_between_products}, the adjunction is a Quillen equivalence. Moreover, the Quillen equivalence is a symmetric monoidal Quillen equivalence since $U$ is a
strict symmetric monoidal functor.
\end{proof}

The preceding results assemble into the following commutative square of weak
symmetric monoidal Quillen equivalences, where the horizontal Quillen equivalences are strong symmetric monoidal.
\[
\begin{tikzpicture}[node distance=3cm]
\node (A) at (0,2) {$(\mathbf{Flex},\boxtimes)$};
\node (B) at (4,2) {$(2\mathbf{Cat},\boxtimes)$};
\node (C) at (0,0) {$(\mathbf{Flex},\otimes)$};
\node (D) at (4,0) {$(2\mathbf{Cat},\otimes)$};

\draw[transform canvas={yshift=0.5ex},->] (A) -- node[above] {$U$} (B);
\draw[transform canvas={yshift=-0.5ex},<-] (A) -- node[below] {$Q$} (B);

\draw[transform canvas={xshift=-0.5ex},->] (A) -- node[left] {$1$} (C);
\draw[transform canvas={xshift=0.5ex},<-] (A) -- node[right] {$1$} (C);

\draw[transform canvas={xshift=-0.5ex},->] (B) -- node[left] {$1$} (D);
\draw[transform canvas={xshift=0.5ex},<-] (B) -- node[right] {$1$} (D);

\draw[transform canvas={yshift=0.5ex},->] (C) -- node[above] {$U$} (D);
\draw[transform canvas={yshift=-0.5ex},<-] (C) -- node[below] {$Q$} (D);
\end{tikzpicture}
\]

\noindent \textbf{Acknowledgments} We would like to thank my husband and Tony Elmendorf for constant encouragement. 

\noindent \textbf{Author Contributions} JT carried out the main research and wrote the main paper. 

\noindent \textbf{Funding} The authors did not receive support from any organization for the submitted work.

\noindent \textbf{Data Availability} No datasets were generated or analyzed during the current study.
Declarations
C

\noindent \textbf{Competing interests} The authors declare no competing interests.


\begin{thebibliography}{99}
\bibitem[Bourke and Gurski, 2015]{BourkeGurski2015}
J. Bourke and N. Gurski,
\emph{A cocategorical obstruction to tensor products of Gray-categories}.
Theory and Applications of Categories \textbf{30} (2015), No.~11, 387--409.

\bibitem[Bourke and Gurski, 2017]{BourkeGurski}
J. Bourke and N. Gurski,
\emph{The Gray tensor product via factorisation}.
Applied Categorical Structures \textbf{25} (2017), no.~4, 603--624.

\bibitem[Campbell, 2026]{Campbell2026}
A. Campbell,
\emph{A Convenient Model Category for Bicategories}.
arXiv:2606.03200, 2026.

\bibitem[Foltz et al., 1980]{FoltzLairKelly1980}
F. Foltz, C. Lair, and G.~M. Kelly,
\emph{Algebraic categories with few monoidal biclosed structures or none}.
Journal of Pure and Applied Algebra \textbf{17} (1980), no.~2, 171--177.

\bibitem[Gray, 1974]{Gray1974}
J.~W. Gray,
\emph{Formal Category Theory: Adjointness for $2$-Categories}.
Lecture Notes in Mathematics, Vol.~391,
Springer-Verlag, Berlin--New York, 1974.

\bibitem[Lack, 2002]{Lack2002}
S. Lack,
\emph{A Quillen model structure for $2$-categories}.
K-Theory \textbf{26} (2002), no.~2, 171--205.

\bibitem[Lack, 2006]{Lack2006}
S. Lack,
\emph{A convenient 2-category of bicategories}.
Talk presented at CT2006, the International Category Theory Conference,
2006. Available at
\url{https://www.mscs.dal.ca/~selinger/ct2006/slides/CT06-Lack.pdf}.

\bibitem[Schwede and Shipley, 2003]{SchwedeShipley2003}
S. Schwede and B.~E. Shipley,
\emph{Equivalences of monoidal model categories}.
Algebraic \& Geometric Topology \textbf{3} (2003), 287--334.
\end{thebibliography}
\end{document}